\newlist{enumth}{enumerate}{1}
\setlist[enumth]{label=\emph{(\arabic*)}, ref=(\arabic*)}
\tikzset{commutative diagrams/arrow style=math font}
\DeclareMathSymbol{A}{\mathalpha}{operators}{`A}%
\DeclareMathSymbol{B}{\mathalpha}{operators}{`B}%
\DeclareMathSymbol{C}{\mathalpha}{operators}{`C}%
\DeclareMathSymbol{D}{\mathalpha}{operators}{`D}%
\DeclareMathSymbol{E}{\mathalpha}{operators}{`E}%
\DeclareMathSymbol{F}{\mathalpha}{operators}{`F}%
\DeclareMathSymbol{G}{\mathalpha}{operators}{`G}%
\DeclareMathSymbol{H}{\mathalpha}{operators}{`H}%
\DeclareMathSymbol{I}{\mathalpha}{operators}{`I}%
\DeclareMathSymbol{J}{\mathalpha}{operators}{`J}%
\DeclareMathSymbol{K}{\mathalpha}{operators}{`K}%
\DeclareMathSymbol{L}{\mathalpha}{operators}{`L}%
\DeclareMathSymbol{M}{\mathalpha}{operators}{`M}%
\DeclareMathSymbol{N}{\mathalpha}{operators}{`N}%
\DeclareMathSymbol{O}{\mathalpha}{operators}{`O}%
\DeclareMathSymbol{P}{\mathalpha}{operators}{`P}%
\DeclareMathSymbol{Q}{\mathalpha}{operators}{`Q}%
\DeclareMathSymbol{R}{\mathalpha}{operators}{`R}%
\DeclareMathSymbol{S}{\mathalpha}{operators}{`S}%
\DeclareMathSymbol{T}{\mathalpha}{operators}{`T}%
\DeclareMathSymbol{U}{\mathalpha}{operators}{`U}%
\DeclareMathSymbol{V}{\mathalpha}{operators}{`V}%
\DeclareMathSymbol{W}{\mathalpha}{operators}{`W}%
\DeclareMathSymbol{X}{\mathalpha}{operators}{`X}%
\DeclareMathSymbol{Y}{\mathalpha}{operators}{`Y}%
\DeclareMathSymbol{Z}{\mathalpha}{operators}{`Z}%
\renewcommand{\leq}{\leqslant}
\renewcommand{\geq}{\geqslant}
\numberwithin{equation}{section}
\newcommand{\uple}[1]{\text{\boldmath${#1}$}}
\newcommand{\Cc}{\mathbf{C}}
\newcommand{\Aa}{\mathbf{A}}
\newcommand{\Zz}{\mathbf{Z}}
\newcommand{\Rr}{\mathbf{R}}
\newcommand{\Gg}{\mathbf{G}}
\newcommand{\Hh}{\mathbf{H}}
\newcommand{\Qq}{\mathbf{Q}}
\newcommand{\Ff}{\mathbf{F}}
\newcommand{\Tt}{\mathbf{T}}
\newcommand{\mmu}{\boldsymbol{\mu}}
\newcommand{\mcO}{\mathscr{O}}
\newcommand{\mcT}{\mathscr{T}}
\newcommand{\Dir}[1]{\widehat{\Ff}^{\times}_{{#1}}}
\newcommand{\expect}{\mathbf{E}}
\def\loccit{loc.\kern3pt cit.{}\xspace}
\def\cf{see\kern.3em}
\def\Cf{See\kern.3em}
\def\eg{e.g.\kern.3em}
\def\resp{\text{resp.}\kern.3em}
\newcommand{\mods}[1]{\,(\mathrm{mod}\,{#1})}
\DeclareMathOperator{\rank}{rank}
\DeclareMathOperator{\vol}{Vol}
\DeclareMathOperator{\Reel}{Re}
\DeclareMathOperator{\Kl}{\mathrm{Kl}}
\DeclareMathOperator{\Hom}{Hom}
\DeclareMathOperator{\ft}{FT}
\newcommand{\eps}{\varepsilon}
\renewcommand{\rho}{\varrho}
\DeclareMathOperator{\GL}{\mathbf{GL}}
\newcommand{\demi}{{\textstyle{\frac{1}{2}}}}
\DeclareMathSymbol{\gena}{\mathord}{letters}{"3C}
\DeclareMathSymbol{\genb}{\mathord}{letters}{"3E}
\theoremstyle{plain}
\newtheorem{theorem}{Theorem}[section]
\newtheorem*{theorem*}{Theorem}
\newtheorem{proposition}[theorem]{Proposition}
\theoremstyle{remark}
\theoremstyle{definition}
\newtheorem{example}[theorem]{Example}
\newtheorem{remark}[theorem]{Remark}
\renewcommand{\geq}{\geqslant}
\renewcommand{\leq}{\leqslant}
\newcommand{\ov}[1]{\overline{#1}}
\newcommand{\Qlb}{{\ov{\Qq}_{\ell}}}
\begin{document}

\title{Toroidal families and averages of $L$-functions, I}
 
\author{\'Etienne Fouvry}
\address{Université Paris--Saclay,   CNRS \\
Laboratoire de Math\'ematiques d'Orsay\\
  91405 Orsay  \\France}
\email{etienne.fouvry@universite-paris-saclay.fr}

\author{Emmanuel Kowalski}
\address{ETH Z\"urich -- D-MATH\\
  R\"amistrasse 101\\
  CH-8092 Z\"urich\\
  Switzerland} \email{kowalski@math.ethz.ch}

\author{Philippe Michel}
\address{EPFL/SB/TAN, Station 8, CH-1015 Lausanne, Switzerland }
\email{philippe.michel@epfl.ch}

\subjclass[2010]{11M06,11T23,11A07}

\keywords{$L$-functions, toroidal family, Dirichlet characters,
  exponential sums over finite fields, toric congruences, moments of
  $L$-functions}

\begin{abstract}
  We initiate the study of certain families of $L$-functions attached to
  characters of subgroups of higher-rank tori, and of their average at
  the central point. In particular, we evaluate the average of the
  values $L(\demi,\chi^a)L(\demi,\chi^b)$ for arbitrary integers~$a$
  and~$b$ when $\chi$ varies over Dirichlet characters of a given prime
  modulus.
\end{abstract}
\date{\today}
\maketitle 


\begin{flushright}
  \textit{Dedicated to Henryk Iwaniec, avec respect, avec gratitude,
    avec admiration}
\end{flushright}

\section{Introduction}

The modern idea of ``family of $L$-functions'' in analytic number theory
crystallized in part in the work of Iwaniec and Sarnak~\cite{IS2} on
Landau--Siegel zeros (see also~\cite{IS1} for a general survey of
$L$-functions). A notable illustration of this idea is then found in
their paper~\cite{IS} concerning the mollified first and second moments
of central values of Dirichlet $L$-functions.

Our goal is to introduce a type of family related to Dirichlet
characters, which seems very natural, but has not been considered to the
best of our knowledge.

We call these families \emph{toroidal families}, as they arise from
families of Dirichlet (or automorphic) characters of \emph{algebraic
  tori} by restricting to those characters that belong to some
\emph{algebraic} subgroup of its group of characters. In the simplest
example, this means that we fix a non-zero vector $(a,b)\in\Zz^2$, and
for any modulus $q\geq 1$, we consider the group of pairs
$(\chi_1,\chi_2)$ of Dirichlet characters modulo $q$ such that
$\chi_1^a\chi_2^b=1$, and we study the properties as $q\to +\infty$ of
the $L$-functions $L(s,\chi_1)$ and $L(s,\chi_2)$ when $(\chi_1,\chi_2)$
satisfies this property. As we will see, even this first case leads to
interesting phenomena.

Our main result in this first paper is the asymptotic computation of the
average of the associated product $L$-values, in the case of prime
moduli.

\begin{theorem}[Second toroidal moment]\label{th-1}
  Let $a$ and $b$ be non-zero integers. There exists an absolute and
  effective constant~$c>0$ such that, defining
  $$
  \delta=\frac{c}{|a|+|b|},
  $$
  we have, for any prime number~$q$, the asymptotic formulas:
  \par
  \emph{(1)} If $a+b=0$, then
  $$
  \frac{1}{q-1}\sum_{\chi\mods{q}}L(\demi,\chi^a)L(\demi,\chi^{-a})=
  \log q+2C+O(q^{-\delta})
  $$
  where $C$ is a real number independent of~$a$.\footnote{\
    See~(\ref{eq-C}) for the value of~$C$.}
  \par
  \emph{(2)} If $a+b\not=0$, then
  $$
  \frac{1}{q-1}\sum_{\chi\mods{q}}L(\demi,\chi^a)L(\demi,\chi^b)=
  \alpha(a,b)+O(q^{-\delta}),
  $$
  where
  $$
  \alpha(a,b)=\begin{cases}
    \zeta(\frac{|a|+|b|}{2(a,b)})&\text{ if $ab<0$},\\
    1&\text{ if $ab>0$}.
  \end{cases}
  $$
\end{theorem}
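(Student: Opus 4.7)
The plan is to reduce the second moment to a counting problem for the multiplicative (``toric'') congruence $m^a n^b \equiv 1 \mods{q}$ by means of the approximate functional equation (AFE) and orthogonality of Dirichlet characters modulo the prime~$q$.

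First I would apply the AFE to each factor $L(\demi,\chi^c)$ for $c\in\{a,b\}$ (provided $\chi^c$ is non-trivial; the characters with $\chi^a=1$ or $\chi^b=1$ form a subgroup of size $(q-1,a)+(q-1,b)\ll 1$ and are easily handled separately). This expresses $L(\demi,\chi^a)L(\demi,\chi^b)$ as a sum of four double sums of shape
$$
\sum_{m,n\geq 1} \frac{\chi^a(m)^{\pm 1}\chi^b(n)^{\pm 1}}{\sqrt{mn}}\,V\!\Bigl(\frac{m}{\sqrt q}\Bigr)V\!\Bigl(\frac{n}{\sqrt q}\Bigr),
$$
each of the three ``dual'' pieces carrying one or two normalized Gauss-sum factors $\tau(\chi^a)/\sqrt q,\ \tau(\chi^b)/\sqrt q$. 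Averaging over $\chi\mods{q}$, the first (``main'') piece produces the indicator $\charfun[m^a n^b\equiv 1\mods q]$ and so collapses to a weighted count of solutions of this toric congruence in the box $m,n\ll q^{1/2+\eps}$.

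Second I would separate the solutions of $m^a n^b\equiv 1\mods q$ into the \emph{integer} solutions of $m^a n^b=1$ and the genuinely modular ones. The integer solutions, parametrised explicitly, yield exactly the three main-term cases:
\begin{itemize}
\item if $a+b=0$ the integer solutions are $m=n$, and the resulting harmonic sum $\sum_{m\geq 1} m^{-1}V(m/\sqrt q)^2$ evaluates via a Mellin-Perron contour shift to $\log q+2C+O(q^{-1/2+\eps})$;
\item if $ab<0$, writing $(a,b)=d$ and $|a|=d\alpha$, $|b|=d\beta$, the integer solutions are $(m,n)=(t^{\beta},t^{\alpha})$ (or the reverse), producing $\sum_{t\geq 1}t^{-(\alpha+\beta)/2}V(\ldots)\to \zeta\bigl(\tfrac{|a|+|b|}{2(a,b)}\bigr)$;
\item if $ab>0$, the only integer solution with $m,n\geq 1$ is $(1,1)$, whence the constant $1$.
\end{itemize}

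The remaining work is to prove that the non-integer (``non-trivial'') contributions, as well as the three dual AFE pieces, are all $O(q^{-\delta})$ with $\delta\asymp 1/(|a|+|b|)$. For the main piece one must control
$$
\sum_{\substack{m,n\leq q^{1/2+\eps}\\ m^a n^b\equiv 1\mods q\\ m^a n^b\neq 1}} \frac{1}{\sqrt{mn}};
$$
for the dual pieces, after opening the Gauss sums, orthogonality produces congruences of the form $m^a n^b\equiv h\mods q$ twisted by additive characters $e_q(\alpha h+\beta h')$, which reduces to bilinear forms in ``toric'' exponential sums over $\Fq$. This is the hard part of the argument: bounding these bilinear forms requires the theory of $\ell$-adic trace functions and Deligne's Riemann Hypothesis applied to sheaves cut out by $m^a n^b = \mathrm{cst}$, and the dependence of the Betti numbers and Swan conductors on $a,b$ is precisely what produces the factor $1/(|a|+|b|)$ in the saving $\delta$. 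The main obstacle is therefore the uniform control of these toric exponential sums together with an effective Pólya--Vinogradov-type completion to remove the smooth cutoffs in $m,n$, and the execution of Step 2 hinges on matching the trivial-solution main term carefully with the Mellin transform of $V$ so that no spurious $\log q$ survives when $a+b\neq 0$.
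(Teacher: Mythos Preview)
Your identification of the main term (the integer solutions of $m^an^b=1$, parametrised as you describe in the three cases) is correct and matches the paper. The overall architecture (AFE + orthogonality $\Rightarrow$ toric congruence count + dual terms carrying Gauss sums) is also right. But your execution diverges from the paper at a decisive point, and the route you sketch for the dual terms is substantially harder than what the paper actually does.

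The paper applies the AFE not to each factor separately but to the \emph{product} $L(s,\chi^a)L(s,\chi^b)$, with \emph{asymmetric} lengths $X,Y$ satisfying $XY=q^2$; for $a+b\neq 0$ it takes roughly $X\asymp q^{9/8}$, $Y\asymp q^{7/8}$. After averaging over~$\chi$, the single dual piece is
\[
P_{a,b}(Y)=\frac{1}{2\sqrt q}\sum_{m,n\geq 1}\frac{\widetilde T_{2a,2b}(m^{2a}n^{2b};q)}{\sqrt{mn}}\,V\Bigl(\frac{mn}{Y}\Bigr),\qquad
\widetilde T_{a,b}(u;q)=\frac{1}{\sqrt q}\sum_{\substack{x,y\in\Ff_q^\times\\x^ay^b=u}}e\Bigl(\frac{x+y}{q}\Bigr).
\]
Because $Y<q$, the one-variable Weil bound $|\widetilde T_{a,b}|\leq |a|+|b|$ (Proposition~\ref{pr-tab-weil}) already gives $P_{a,b}(Y)\ll q^{-1/2}Y^{1/2+\eps}$, a power saving. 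No bilinear forms with trace functions, no sheaf-theoretic input beyond the classical Weil bound for a rational-function exponential sum. With your balanced cutoffs $m,n\ll\sqrt q$ this corresponds to $Y=q$, where the same estimate gives only $O(q^{\eps})$; the paper explicitly notes (Remark~\ref{rempisa}) that improving this in the balanced regime would require the much deeper bilinear machinery of~\cite{Pisa}. So the ``hard part'' you flag is precisely what the paper sidesteps by unbalancing the AFE.

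The trade-off is that the non-trivial solutions of $m^{2a}n^{2b}\equiv 1\mods q$ must now be controlled in the longer hyperbolic region $mn\ll q^{9/8}$. Here the paper invokes Pierce's Burgess-type estimate (Theorem~\ref{thmpierce}), which your proposal does not name. For $a+b=0$ this tool is unavailable (its hypothesis excludes $a/b$ a negative integer), and the paper instead takes $X=Y=q$, observes that $\eps(\chi^a)\eps(\chi^{-a})=\pm 1$ so the dual piece folds back into the main one, and handles the non-diagonal solutions by elementary lattice-point counting for each $2a$-th root of unity $\rho$: the key input is that the minimum of $\{(m,n):m\equiv\rho n\mods q\}$ is $\gg q^{1/|a|}$ when $\rho^2\neq 1$. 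It is these lower bounds on non-trivial solutions ($\max(m,n)\gg q^{1/(|a|+|b|)}$ or $q^{1/\max(|a|,|b|)}$), not Betti numbers or Swan conductors of any sheaf, that produce the exponent $\delta\asymp 1/(|a|+|b|)$.
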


Here is an interpretation of this result. First, if we take $b=-a$ with
$a\geq 2$ and suppose that we consider the primes $q\equiv 1\mods{a}$,
then the toroidal average of Theorem~\ref{th-1} can also be expressed as
the sum
\begin{equation}\label{eq-twist}
  \frac{1}{q-1}\sum_{\chi\mods{q}}|L(\demi,\chi^a)|^2=
  \sum_{\rho^a=1\mods{q}} \frac{1}{q-1} \sum_{\chi\mods{q}}
  \chi(\rho)|L(\demi,\chi)|^2.
\end{equation}

Our result is then an analogue of the estimation of the second moment
of critical values of Dirichlet $L$-functions twisted by $\chi(\rho)$,
which is the building block of the mollified or amplified second
moment (see, again,~\cite{IS}). The crucial difference from earlier
works is that in~(\ref{eq-twist}), the value of~$\rho$ is \emph{not} a
fixed integer, except if $\rho=1$ (which gives the main term) or if
$\rho=-1$, when $a$ is even (see also Remark~\ref{rm-pol}, where we
explain that one can also bound the further variant where~$\rho$ is a
root of an irreducible polynomial congruence of degree~$\geq 2$).

The fact that the second moment has the same order of magnitude as the
full average
$$
\frac{1}{q-1}\sum_{\chi\mods{q}}|L(\demi,\chi)|^2\sim \log q
$$
(the special case $a=-b=1$ of Theorem~\ref{th-1}, which was first proved
by Paley~\cite[Th.\,B]{paley})
indicates a kind of ``fair distribution'' of the large values of
$L(\demi,\chi)$ among the characters of the form $\chi^a$, or in other
words, among those that are trivial on $a$-th roots of unity
modulo~$q$.

Moreover, comparing with the formula for $a+b\not=0$, we conclude that
the sizes of the values $L(\demi,\chi^a)$ and $L(\demi,\chi^b)$ are
``uncorrelated'' if $a\not=b$. In this respect, the result is
reminiscent of the well-known fact that
$$
\sum_{n\leq x}d(n)d(n+1)
$$
is of order of magnitude $x(\log x)^2$ for $x$ large, whereas
$$
\sum_{n\leq x}d(n)^2 
$$
is of order of magnitude $x(\log x)^3$.

\begin{remark}
  (1) The first part of Theorem~\ref{th-1} implies (and is essentially
  equivalent) to the following: for a fixed integer~$a\geq 1$, as $q$
  varies among prime numbers, we have
  $$
  \frac{1}{q-1}\sum_{\substack{\chi\mods{q}\\\chi=\eta^a\text{ for
        some $\eta$}}} |L(\demi,\chi)|^2=
  \frac{1}{(a,q-1)}\Bigl(\log q+2C\Bigr)+O(q^{-\delta}),
  $$
  i.e., a second moment formula over the family of characters which are
  $a$-th powers (indeed, a character which is an $a$-th power is the
  $a$-th power of $(a,q-1)$ characters).

  (2) We have not attempted to obtain the best possible error terms
  available with current technology; our goal here is primarily to grasp
  the interesting new aspects of non-trivial toroidal families.

  (3) Suppose that $a+b\not=0$. In a standard way, Theorem \ref{th-1}
  establishes the existence of many characters modulo $q$ such that
  $$
  L(\demi,\chi^a)L(\demi,\chi^b)\not=0
  $$
  (a simultaneous non-vanishing result). Indeed, applying Cauchy's
  inequality twice and simply the upper bound
  $$
  \frac{1}{q-1}\sum_{\chi\mods q}|L(\demi,\chi^a)|^4
  \ll
  \frac{1}{q-1}\sum_{\chi\mods q}|L(\demi,\chi)|^4
  \ll (\log q)^4
  $$
  for $q$ prime (also proved first by Paley~\cite[Th.\,IV]{paley}),
  we see that
  $$
  |\{\chi\mods q\,\mid\, L(\demi,\chi^a)L(\demi,\chi^b)\not=0\}|\gg
  \frac{q}{(\log q)^4}.
  $$
  
  In fact, the power-saving error term in Theorem~\ref{th-1} means that
  it is straightforward to apply the mollification method (as
  in~\cite{IS}, see also~\cite{moments}) and prove that
  $$
  |\{\chi\mods q\,\mid\, L(\demi,\chi^a)L(\demi,\chi^b)\not=0\}|\gg q
  $$
  for $q$ prime, where the implicit constant depends on $(a,b)$ (see
  the work of Zacharias~\cite{zacharias} for some related results).

  (4) We note that although ``families of $L$-functions'' still appear
  most often in an informal manner based on concrete examples, there
  are significant attempts to provide a precise definition (e.g.,
  those of Sarnak, Shin and Templier~\cite{sst} and
  Kowalski~\cite{kowalski}).

  In an automorphic perspective, a natural framework seems to be the
  following. Let $K$ be a number field with ring of integers~$\mcO$
  and adèle ring~$\Aa_K$.  Let~$\Tt_1$ and~$\Tt_2$ be two tori (more
  generally, one can consider groups of multiplicative type) defined
  over~$\mcO$, and assume they are split for simplicity. Let
  $\varphi\colon \Tt_1\to \Tt_2$ be an algebraic morphism defined
  over~$\mcO$. Then $\chi\mapsto \chi\circ \varphi$ gives a
  map~$\varphi^*$ from characters of $\Tt_2(\Aa_K)/\Tt_2(K)$ to those
  of $\Tt_1(\Aa_K)/\Tt_1(K)$, i.e., from automorphic characters of one
  torus to the other.

  Let~$X_*(\Tt_1)$ be the abelian group of algebraic cocharacters
  of~$\Tt_1$. The $L$-group of $\Tt_1$ is the complex dual torus
  $\Hom(X_*(\Tt_1),\Cc^{\times})$ of $\Tt_1$, and similarly for $\Tt_2$
  (because we assumed that the tori are split).

  Let now $\rho\colon {}^L\Tt_1\to \GL_r$ be a faithful continuous
  representation. By the Langlands formalism, this defines for any
  automorphic character $\chi$ of $\Tt_1$ the associated $L$-function
  $L(s,\chi,\rho)$. Thus, varying $\chi$ over automorphic characters
  of~$\Tt_2$, we obtain a family of $L$-functions
  $L(s,\varphi^*\chi,\rho)$.  These are general forms of toroidal
  families.

  To illustrate this construction, let $K=\Qq$, $\Tt_1=\Gg_m^2$,
  $\Tt_2=\Gg_m$ and $\varphi(x,y)=x^ay^b$. Then
  ${}^L\Tt_1\simeq (\Cc^{\times})^2$ and we can take the embedding
  $\rho\colon {}^L\Tt_1\to \GL_2$ of ${}^L\Tt_1$ as diagonal
  matrices. For any $d\geq 1$, the automorphic characters of $\Gg_m^d$
  over~$\Qq$ are simply $d$-tuples of (primitive) Dirichlet
  characters; the map $\varphi^*$ then maps a primitive Dirichlet
  character~$\chi$ modulo~$q$ to the pair of Dirichlet characters
  $(\chi^a,\chi^b)$ (modulo primitivity issues), and we have
  $$
  L(s,\varphi^*\chi,\rho)=L(s,\chi^a)L(s,\chi^b),
  $$
  which recovers the type of $L$-functions in
  Theorem~\ref{th-1}. (Other options are possible, as pointed out by
  the referee; for instance taking $\rho\colon {}^L\Gg_m\to \GL_2$
  defined by $z\mapsto (z^a,z^b)$, we have also
  $L(s,\chi,\rho)=L(s,\chi^a)L(s,\chi^b)$.)

  We note that the set of automorphic characters of a torus with
  bounded conductor is amenable to analytic investigations in (more
  than) the generality we have described, due to the remarkable recent
  work of Petrow~\cite{petrow}. Thus some questions about toroidal
  families should be accessible in this generality.
\end{remark}

The outline of the remainder of this paper is the following: we will
first discuss some natural questions and generalizations of toroidal
averages, then consider separately some ingredients that should arise in
their evaluation in all cases, namely properties of certain exponential
sums and counting solutions of toric congruences, which (for later
purposes) we discuss in a broader context than needed in the present
paper (see Remark~\ref{rm-general}). We then give the proof of
Theorem~\ref{th-1}. After some elementary common considerations, this
splits naturally in two cases, depending on whether $a+b=0$ or not.

\subsection*{Notation}
\label{sec-conventions}

Given complex-valued functions $f$ and $g$ defined on a set $X$, we
write $f \ll g$ if there exists a real number $A \geq 0$ (called an
``implicit constant'') such that the inequality $|f(x)|\leq A g(x)$
holds for all $x \in X$. We write $f\asymp g$ if $f\ll g$ and~$g\ll f$.

We denote by~$|X|$ the cardinality of a set~$X$.


For any prime~$q$, we denote by $\Dir{q}$ the group of Dirichlet
characters modulo~$q$ (in other words, the group of characters
of~$(\Zz/q\Zz)^{\times}$).

For a Dirichlet character~$\chi$ modulo a prime number~$q$, we denote by
$$
\eps(\chi)=\frac{1}{\sqrt{q}}
\sum_{x\in\Ff_q^{\times}}\chi(x)e\Bigl(\frac{x}{q}\Bigr),
$$
the normalized Gauss sums, so that $|\eps(\chi)|=1$ if $\chi$ is
non-trivial. More generally, for an integer $k\geq 1$ and
a $k$-tuple $\uple{\chi}=(\chi_1,\ldots, \chi_k)$  of characters
modulo~$q$, we put
$$
\eps(\uple{\chi})=\prod_{j=1}^k\eps(\chi_j).
$$

Let~$K$ be a field. For an integer $k\geq 1$, we write
$x\cdot y=x_1y_1+\cdots+x_ky_k$ for the standard bilinear form on $K^k$.
For any integer~$d\geq 1$, we denote by $\mmu_d(K)$ the group of $d$-th
roots of unity which belong to~$K$.

Let~$k$ be an integer. A \emph{box} in $\Zz^k$ is a product
$B=I_1\times\cdots \times I_k$ where each $I_j=\{a_j,\ldots, b_j\}$ is
an interval in $\Zz$ with $a_j\leq b_j$. By the size of the box, we mean
simply its cardinality, and the boundary $\partial B$ is the union of
the sets
$$
I_1\times\cdots \times I_{j-1}\times \{a_j\}\times I_{j+1}\times \cdots
\times I_k,
\quad\quad
I_1\times\cdots \times I_{j-1}\times \{b_j\}\times I_{j+1}\times \cdots
\times I_k
$$
for $1\leq j\leq k$.

\subsection*{Acknowledgments}

This work was started during a visit of É.F. and Ph.M. to the
Forsch\-ung\-ssinstitut für Mathematik (FIM) at ETH Zürich, and
continued during a visit of É.F. and E.K. at the Bernoulli Center for
Fundamental Studies of EPF Lausanne. We all thank these institutions
for the excellent and stimulating work conditions.

We thank K. Soundararajan for pointing out to us the related works of
Pliego~\cite{pliego,pliego2} and of Khan--Milićević--Ngo~\cite{kmn},
and R. Nunes for pointing out the work of Bettin~\cite{bettin} (see
Section~\ref{sec-future} below). We also thank I. Shparlinski for
pointing out his work with M. Munsch~\cite{munsch-shparlinski}.

Ph. M. was partially supported by the SNF grant 200021\_197045.

We warmly thank the referee for a very detailed and careful report.

\section{Prospective and related works}\label{sec-future}

This section may be skipped in a first reading. We will list some
obvious variants and potential generalizations of toroidal families,
some of which we hope to study in later papers. See also the work of
Blomer, Fouvry, Milićević, Kowalski, Michel and Sawin~\cite{moments} for
a relatively systematic discussion of problems (and results) about
families of $L$-functions, many parts of which could be adapted to our
context.

\begin{enumerate}
\item A general form of toroidal averages is the following. Fix
  $k\geq 2$. Consider an integral matrix $ A=(a_{i,j})$ with $k$ columns
  (and an arbitrary finite number of rows), and for any prime number
  $q$, define
  $$
  \widehat{H}_{A}(q)=\{(\chi_1,\ldots,\chi_k)\mods{q}\,\mid\,
  \prod_{j=1}^k\chi_j^{a_{i,j}}=1\text{ for all }i \},
  $$
  a subgroup of the group of characters of $(\Ff_q^{\times})^k$.  We want
  to evaluate asymptotically
  $$
  \sum_{\uple{\chi}\in \widehat{H}_{A}(q)} \prod_{j=1}^k L(\demi,
  \chi_j)
  $$
  for $q$ prime. The case of Theorem~\ref{th-1} is essentially the
  case $k=2$ with $A=(-b,a)$. An analogue with $k=3$ would be
  the study of a sum like
  $$
  \sum_{\chi_1,\chi_2\mods{q}} L(\demi, \chi_1^a)L(\demi,\chi_2^b)L(\demi,
  \chi_1^c\chi_2^d).
  $$
  \par
  Besides the work in the present paper, we are only aware of two
  previous cases, which were considered by Bettin~\cite{bettin}, and
  Nordentoft~\cite{nordentoft} . Bettin considers the second moment
  for the subgroup defined by $\chi_1\cdots\chi_k\chi_{k+1}^{-1}=1$,
  and Nordentoft considers the subgroup defined by
  $\chi_1\cdots\chi_k=1$ (both for $k\geq 2$).  In addition, the case
  $a+b=0$ of Theorem~\ref{th-1} was considered after this paper was
  first submitted by Munsch and Shparlinski~\cite{munsch-shparlinski}.
  
\item In addition to the characters in some subgroup $\widehat{H}$ of
  tuples of Dirichlet characters modulo~$q$, it is natural to consider
  those in a \emph{coset} modulo such a subgroup. This means that in
  addition to the matrix~$A$, we also consider for any prime $q$ some
  characters $(\eta_1,\ldots,\eta_k)$ modulo~$q$ (arbitrarily chosen)
  and we look at the averages
  $$
  \sum_{\uple{\chi}\in \widehat{H}_{A}(q)} \prod_{j=1}^k L(\demi,
  \eta_j\chi_j).
  $$
  \par
  An example of this type with $k=3$ and $A=(1,1,1)$ has been considered
  by Zacharias~\cite{zacharias}, and Nordentoft~\cite{nordentoft} also
  considers such twists in his special case.

\item We focus here for simplicity on groups of characters which are
  dual groups of the groups of rational points on a torus which is
  split over~$\Ff_q$. It is likely that non-split cases would also
  lead to interesting questions. See \cite{MV} and \cite{nordentoft2}
  for works in that direction.
  
\item Similar questions can be raised for twisted $L$-functions of
  various kinds, for instance 
  \begin{equation}
    \label{second}
    \frac{1}{q-1}\sum_{\chi\mods{q}}L(\demi,f\otimes
    \chi^a)L(\demi,g\otimes\chi^b)
  \end{equation}
  for some modular forms~$f$ and~$g$, or the mixed moment
  \begin{equation}
    \label{mixed}
    \frac{1}{q-1}\sum_{\chi\mods{q}}L(\demi,\chi^a)L(\demi,f\otimes
    \chi^b).
  \end{equation}
  There are currently very few pairs $(a,b)$ for which these moments
  have been evaluated asymptotically; these are
  $$
  (a,b)=(1,-1),\ (2,-2)
  $$
  in the case of \eqref{second} (see \cite{KMSAnn,moments}), and
  $$
  (a,b)=(1,-1),(2,-2), (1,1),\ (2,2)
  $$
  in the case of \eqref{mixed} (see \cite{DK,zacharias}). Techniques
  inspired by \cite{Pisa} should make it possible to deal with more
  pairs $(a,b)$ for the mixed moment \eqref{mixed} (see Remark
  \ref{rempisa}).
  
  One can also of course study averages with derivatives of central
  values, and one might look at values $L(s,\chi)$ of the $L$-functions
  where $s\not=\demi$, or combine these variants.
  
\item A natural archimedean analogue of our problem consists in studying
  $$
  \int_{T}^{2T}\zeta(\demi+a_1it)\cdots \zeta(\demi+a_kit)dt
  $$
  for fixed $(a_1,\ldots,a_k)\in (\Rr^{\times})^k$. Some work has indeed
  been done by Pliego on this question in the case of three factors
  (see~\cite{pliego} and~\cite{pliego2}).  Some rather interesting
  phenomena related to diophantine approximation appear in his analysis.
  
\item Yet another question would be to study the \emph{distribution} of
  pairs
  $$
  (L(1,\chi^a),L(1,\chi^b))\in \Cc^2,
  $$
  when $\chi$ varies. As $q\to +\infty$, one will obtain a limiting
  distribution given by the law of the random vector
  $$
  \Bigl(\prod_p (1-U_p^a/p)^{-1},\prod_p (1-U_p^b/p)^{-1}\Bigr),
  $$
  where $(U_p)_p$ is a sequence, indexed by primes, of independent
  random variables all equidistributed on the unit circle. 
  Because of the dependency of the two components, it would be
  interesting to have, for instance, estimates for large values, large
  deviations, and related statistics, comparable to those known for
  $L(1,\chi)$ (see the paper~\cite{lamzouri} of Lamzouri).
  
  Similar properties hold for the values at any~$s$ with
  $\Reel(s)>1/2$. At~$s=1/2$, one can expect a Central Limit Theorem
  for
  $$
  \frac{ \log |L(\demi,\chi^a)L(\demi,\chi^b)|}{\sqrt{\log\log q}}
  $$
  (but note that it remains a major open problem to obtain such a
  result even for $\log|L(\demi,\chi)|$; Radziwi\l\l\ and
  Soundararajan~\cite{r-s} have however announced a proof of a result
  of this type when conditioning on characters with
  $L(\demi,\chi)\not=0$). Note that if we consider the pairs
  $(\log|L(\demi,\chi^a)|,\log|L(\demi,\chi^b)|)$, we can expect that
  after renormalization, the two components will become independent
  gaussians if $a\not=-b$ (formally, because
  $$
  \expect\Bigl(\sum_{p\leq X} \frac{U_p^a}{\sqrt{p}}\cdot
  \overline{\sum_{p\leq X}\frac{U_p^b}{\sqrt{p}}}\Bigr)=
  \sum_{p\leq X}\frac{1}{p}\expect(U_p^{a+b})=0
  $$
  by independance, which will imply that the limits should be
  uncorrelated gaussians).

\item We have considered prime moduli~$q$ for simplicity, but more
  general moduli can also be investigated. Moreover, other families of
  Dirichlet characters can be used to build toroidal averages, e.g., we
  can average as $Q\to+\infty$ over pairs $(\chi^a,\chi^b)$ as $\chi$
  ranges over all primitive Dirichlet characters with conductor
  $\leq Q$. This may open the door to higher values of~$k$, as happens
  with the moments of $L(\demi,\chi)$ (see for instance the
  work~\cite{ciw} of Conrey, Iwaniec and Soundararajan, and the recent
  work~\cite{chandee-li} of Chandee and Li).

  The paper~\cite{kmn} of Khan, Milićević and Ngo can be understood as
  somewhat similar: roughly speaking, they consider (mollified) first
  and second moments of the type
  $$
  \sum_{\chi\in\mathcal{O}}L(\demi,\chi),\quad\quad
  \sum_{\chi\in\mathcal{O}}|L(\demi,\chi)|^2
  $$
  where $\mathcal{O}$ is a (suitably large) Galois orbit of characters
  modulo~$p^k$ for a fixed prime number~$p$ and $k\to +\infty$ (in the
  simplest case, this amounts to characters of a fixed order, large
  enough that the orbit is big enough for averaging). The key new
  arithmetic ingredient they use is related to $p$-adic diophantine
  approximation~\cite[\S\,2.4]{kmn}.
  
  One can also look, from the point of view of changing moduli, at the
  toroidal family of primitive characters defined by the equation
  $\chi^2=1$, for $q\leq Q$, recovering this way the well-known family
  of real characters of bounded conductor as a kind of generalized
  toroidal family (although that is unlikely to be a useful point of
  view for the study of this family). We note that
  Chinta~\cite{chinta} deals with a somewhat related family, which
  corresponds to the equations
  $$
  \begin{cases}
    \chi_1^2=\chi_2^2=1\\
    \chi_1\chi_2\chi_3=1,
  \end{cases}
  $$
  but now (roughly speaking) with $\chi_1$, $\chi_2$ of moduli $q_1$,
  $q_2$ with~$q_1q_2\leq Q$.
  
\item We have worked with the base field~$\Qq$ for simplicity, but any
  other global field leads to similar questions. Function fields
  should, as usual, be somewhat more accessible, especially in the
  large finite field limit.
\end{enumerate}

\section{Exponential sums}\label{sec-exp-sums}

The first subsection can be mostly skipped in a first reading, since the
exponential sums relevant for the proof of Theorem~\ref{th-1} are
treated from scratch in Section~\ref{ssec-exp-th1}.

\subsection{General sums}\label{sec-exp-general}

We consider a more general situation for future reference.  Let
$k\geq 1$ be an integer. For any prime number~$q$ and any subgroup $H$
of~$(\Ff_q^{\times})^k$, we consider families of exponential sums of the
type
$$
u\mapsto \sum_{(x_j)\in uH}e\Bigl(\frac{x_1+\cdots+x_k}{q}\Bigr)
$$
for $u\in (\Ff_q^{\times})^k$. Special cases of these sums include the
hyper-kloosterman sums $\Kl_k$, which correspond to the subgroup~$H$
defined by $x_1\cdots x_k=1$: we then have
$$
\sum_{(x_j)\in uH}e\Bigl(\frac{x_1+\cdots+x_k}{q}\Bigr)= \sum_{x_1\cdots
  x_k=u_1\cdots u_k}e\Bigl(\frac{x_1+\cdots+x_k}{q}\Bigr).
$$

In fact, we observe that the condition $x\in uH$ is equivalent to
$x_j=u_jh_j$ for all~$j$ with $(h_j)\in H$, so that for
$u\in (\Ff_q^{\times})^k$, we can also write
$$
\sum_{(x_j)\in uH}e\Bigl(\frac{x_1+\cdots+x_k}{q}\Bigr)= \sum_{h\in
  H}e\Bigl(\frac{u_1h_1+\cdots+u_kh_k}{q}\Bigr).
$$

This formula identifies our family of exponential sums as the
restriction to $(\Ff_q^{\times})^k$ of the discrete Fourier transform of
the characteristic function of~$H\subset \Ff_q^k$. It is then most
convenient to extend the definition to all $u\in\Ff_q^k$ by the last
formula, as this allows us to use the standard formalism of Fourier
analysis.

Finally, it is convenient to normalize these sums before giving them a
name. There are various possibilities, and our choice is based on
obtaining generically sums of size one.  

More precisely, our interest lies in situations where $q$ varies
and~$H$ is for each~$q$ the group of $\Ff_q$-points of a fixed
algebraic subgroup~$\Hh$ of $\Gg_m^k$ over~$\Zz$. Concretely, this
means that we fix an integer matrix $A=(a_{i,j})$ with $k$ columns
(and a finite number of rows), and consider the subgroup
\begin{equation}\label{eq-subgp}
  \Hh_A(\Ff_q)=\{x=(x_j)\in (\Ff_q^{\times})^k
  \,\mid\, \prod_{j}x_j^{a_{i,j}}=1\text{ for all }
  i\}
\end{equation}
for all prime numbers~$q$. This is the group of $\Ff_q$-points of the
algebraic subgroup
$$
\Hh_A=\{x=(x_j)\in\Gg_m^k\,\mid\, \prod_{j}x_j^{a_{i,j}}=1\text{ for
  all } i\}
$$
of $\Gg_m^k$. For instance, taking $A=(1,\ldots,1)\in\Zz^k$, we
recover the subgroup which gives rise to $\Kl_k$.

We then define
\begin{equation*}
  T_A(u;q)=\frac{1}{q^{(k-\rank(A))/2}}\sum_{(x_j)\in
    \Hh_A(\Ff_q)}e\Bigl(\frac{x\cdot u}{q}\Bigr)
\end{equation*}
for $u\in \Ff_q^k$ (note that $\dim(\Hh)=k-\rank(A)$; in the case of
the hyper-Kloosterman sums, these sums are bounded for all
$u\in(\Ff_q^{\times})^{k}$ by Deligne's estimate).

Furthermore, we define
\begin{equation}\label{eq-perp}
  \Hh_A^{\perp}(\Ff_q)=\{\uple{\chi}=(\chi_j)\in (\Dir{q})^k\,\mid\,
  \uple{\chi}(x)=\prod_{j=1}^k\chi_j(x_j)=1\text{ for all } x=(x_j)\in
  \Hh_A(\Ff_q)\}.
\end{equation}

\begin{proposition}\label{pr-exp-gen}
  Let $A$ be an integral matrix as above.  For any prime number~$q$ and
  for any $u\in (\Ff_q^{\times})^k$, we have
  $$
  \frac{1}{|\Hh_A^{\perp}(\Ff_q)|} \sum_{\uple{\chi}\in
    \Hh_A^{\perp}(\Ff_q)} \eps(\uple{\chi})\overline{\uple{\chi}(u)}=
  \frac{1}{q^{\rank(A)/2}}T_A(u;q).
  $$
\end{proposition}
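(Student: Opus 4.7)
The plan is to expand the left-hand side using the definition of the normalized Gauss sums, reduce it to a Fourier expression by a change of variables, and then conclude by orthogonality of characters on $(\Ff_q^{\times})^k$.

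First, I would write out the product of Gauss sums as a single sum:
$$
\eps(\uple{\chi})=\frac{1}{q^{k/2}}\sum_{y\in (\Ff_q^\times)^k}\uple{\chi}(y)\,
e\!\Bigl(\frac{y_1+\cdots+y_k}{q}\Bigr).
$$
Since $u\in(\Ff_q^\times)^k$, I perform the change of variable $y_j = u_j z_j$. The multiplicativity of $\uple{\chi}$ makes the factor $\overline{\uple{\chi}(u)}$ cancel, and the exponential becomes $e((z\cdot u)/q)$, yielding
$$
\eps(\uple{\chi})\,\overline{\uple{\chi}(u)}
=\frac{1}{q^{k/2}}\sum_{z\in (\Ff_q^\times)^k}\uple{\chi}(z)\,
e\!\Bigl(\frac{z\cdot u}{q}\Bigr).
$$

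Next, I would sum over $\uple{\chi}\in\Hh_A^\perp(\Ff_q)$ and swap the order of summation. This brings the character sum $\sum_{\uple{\chi}\in\Hh_A^\perp(\Ff_q)}\uple{\chi}(z)$ inside. The key input is the standard orthogonality/duality for finite abelian groups: since $\Hh_A^\perp(\Ff_q)$ is by definition~\eqref{eq-perp} the annihilator of $\Hh_A(\Ff_q)$ inside the dual of $(\Ff_q^\times)^k$, the double-annihilator coincides with $\Hh_A(\Ff_q)$ itself, and therefore
$$
\sum_{\uple{\chi}\in\Hh_A^\perp(\Ff_q)}\uple{\chi}(z)
=\begin{cases}|\Hh_A^\perp(\Ff_q)|&\text{if }z\in\Hh_A(\Ff_q),\\ 0&\text{otherwise.}\end{cases}
$$
Note that every $z\in\Hh_A(\Ff_q)$ automatically lies in $(\Ff_q^\times)^k$, so the restriction in the outer sum causes no loss.

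Combining these steps gives
$$
\frac{1}{|\Hh_A^\perp(\Ff_q)|}\sum_{\uple{\chi}\in\Hh_A^\perp(\Ff_q)}\eps(\uple{\chi})\,\overline{\uple{\chi}(u)}
=\frac{1}{q^{k/2}}\sum_{z\in\Hh_A(\Ff_q)}e\!\Bigl(\frac{z\cdot u}{q}\Bigr).
$$
Finally, comparing with the definition of $T_A(u;q)$, the prefactor $q^{-k/2}$ equals $q^{-\rank(A)/2}\cdot q^{-(k-\rank(A))/2}$, which identifies the right-hand side as $q^{-\rank(A)/2}T_A(u;q)$, as required.

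There is no real obstacle: the proof is a formal Fourier/duality computation on the finite abelian group $(\Ff_q^\times)^k$. The only point demanding minimal care is verifying the double-annihilator identity $(\Hh_A^\perp)^\perp=\Hh_A$ to apply orthogonality in the correct direction, and keeping track of the normalization exponents so that the final powers of $q$ match.
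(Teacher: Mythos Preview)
Your proof is correct and follows essentially the same approach as the paper: expand the product of Gauss sums, apply orthogonality of characters via the duality relation $(\Hh_A^\perp)^\perp=\Hh_A$, and identify the result with the definition of $T_A(u;q)$. The only cosmetic difference is that the paper phrases the orthogonality as detecting $xu^{-1}\in\Hh_A(\Ff_q)$ directly, whereas you first substitute $y=uz$ to detect $z\in\Hh_A(\Ff_q)$; these are equivalent.
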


\begin{proof}
  This is a direct consequence of the definitions, and of orthogonality,
  in the form of the relation
  $$
  \frac{1}{|\Hh_A^{\perp}(\Ff_q)|}\sum_{\uple{\chi}\in
    \Hh_A^{\perp}(\Ff_q)}\uple{\chi}(xu^{-1})=
  \begin{cases}
    1&\text{ if } xu^{-1}\in \Hh_A(\Ff_q),\\
    0&\text{ otherwise,}
  \end{cases}
  $$
  which follows from the duality of finite abelian groups.
\end{proof}

We are interested in bounds for the exponential sums~$T_A(u;q)$. The
mean-square bound is elementary, but the best pointwise bounds lie
obviously much deeper, and in fact we only state these in the case of a
non-negative matrix, in the ``stratified form'' going back to Katz and
Laumon (see also~\cite{fouvry} for other applications of such
stratification results).

We require the following (non-standard) definition: a matrix
$A=(a_{i,j})$ as above is \emph{of affine type} if all coefficients
$a_{i,j}$ are non-negative, and if for any~$j$ with $1\leq j\leq k$,
there exists some $i$ such that~$a_{i,j}\geq 1$. The point is that for
such a matrix, we have
$$
\Hh_A=\{(x_j)\in\Aa^k\,\mid\, \prod_{j}x_j^{a_{i,j}}=1\text{ for all } i\},
$$
i.e., that all solutions of the equations with solutions in~$\Aa^k$
are in fact in~$\Gg_m^k$. (For instance, the empty matrix is not of
affine type.)

We also say that~$A$ is \emph{of connected type} if the subgroup~$X_A$
of~$\Zz^k$ generated by the rows of~$A$ is such that the quotient
abelian group $\Zz^k/X_A$ is torsion-free. This amounts to saying that
the character group of~$\Hh_A$ is torsion-free, and implies that the
group $\Hh_A$ (over~$\Zz$) is connected, i.e., is a torus (instead of
a general group of multiplicative type).

\begin{theorem}\label{th-fk}
  Let $A$ be an integral matrix as above.

  \emph{(1)} For any $q$ prime, we have
  $$
  \frac{1}{q^k}\sum_{u\in
    \Ff_q^k}|T_A(u;q)|^2=\frac{|\Hh_A(\Ff_q)|}{q^{k-\rank(A)}},
  $$
  and this quantity converges as $q\to +\infty$ to the number of
  connected components of~$\Hh_A$.

  \emph{(2)} Suppose that~$A$ is of affine and connected type. There
  exists a dense open set~$U\subset \Aa^k_{\Zz}$ such that
  $$
  |T_A(u;q)|\ll 1
  $$
  for all primes~$q$ and all~$u\in U(\Ff_q)$, where the implicit constant
  depends only on~$\rank(A)$.

  More precisely, there exist closed subschemes
  $X_k\subset \cdots\subset X_1\subset \Aa^k_{\Zz}$, defined by
  homogeneous equations, with~$X_j$ of relative dimension $\leq k-j$,
  such that for $q$ prime and $u\in (\Ff_q^{\times})^k$ with
  $u\notin X_j(\Ff_q)$, we have
  $$
  |T_A(u;q)|\ll q^{\max(0,(j-2)/2)}.
  $$
\end{theorem}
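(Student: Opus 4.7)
This is Plancherel. Expanding the square,
$$
|T_A(u;q)|^2 = \frac{1}{q^{k-\rank(A)}}\sum_{x,y\in \Hh_A(\Ff_q)} e\Bigl(\frac{(x-y)\cdot u}{q}\Bigr),
$$
and summing over $u\in \Ff_q^k$, the orthogonality of additive characters collapses the $u$-sum to $q^k\charfun_{x=y}$, giving $\sum_u |T_A(u;q)|^2=q^{\rank(A)}|\Hh_A(\Ff_q)|$. Dividing by $q^k$ yields the stated identity. For the convergence, $\Hh_A$ is a group of multiplicative type over $\Zz$ of relative dimension $d=k-\rank(A)$; the Lang--Weil bound (or the explicit formula for the zeta function of such a group through its character lattice) gives $|\Hh_A(\Ff_q)|=\pi_0(\Hh_A\otimes\bFq)\cdot q^d+O(q^{d-1/2})$ once the splitting field of the character group embeds in $\Ff_q$, yielding the claimed limit.

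\textbf{Setup for part (2).} The affine type hypothesis ensures that $i\colon \Hh_A\hookrightarrow \Aa^k_\Zz$ is a closed immersion, and the connected type hypothesis ensures that $\Hh_A$ is a torus, in particular smooth and geometrically irreducible of dimension $d$. Up to the normalizing factor and a sign, $T_A(u;q)$ is the trace function of the $\ell$-adic Fourier--Deligne transform $\ft_\psi(i_*\bQl)$ (with $\psi(t)=e(t/q)$) of the pushforward of the constant sheaf on $\Hh_A$. By Deligne's Weil II theorem together with Laumon's stationary phase analysis, the shifted complex $\mcF=\ft_\psi(i_*\bQl[d])$ is a pure perverse sheaf on $\Aa^k$ of weight $d+k$. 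On a dense open subset $U\subset \Aa^k_\Zz$, it is a local system (placed in the appropriate degree) of generic rank bounded in terms of $\rank(A)$ only, so the Grothendieck--Lefschetz trace formula yields $|T_A(u;q)|\ll 1$ uniformly on $U(\Ff_q)$.

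\textbf{Stratification.} The nested closed subschemes $X_j$ come from the Katz--Laumon stratification theorem for $\ell$-adic Fourier transforms, in the form used in~\cite{fouvry}. By compactifying $\Aa^k$ to $\Pp^k$ and analyzing the behaviour of $\mcF$ along the hyperplane at infinity via the monomial equations defining $\Hh_A$, one produces a descending chain $X_k\subset\cdots\subset X_1\subset \Aa^k_\Zz$ cut out by homogeneous equations, with $\dim X_j \leq k-j$, such that outside $X_j$ only cohomology sheaves of $\mcF$ of weight at most $j-2$ can contribute to the trace at $u$. The Grothendieck--Lefschetz trace formula then yields $|T_A(u;q)| \ll q^{(j-2)/2}$ on the complement of $X_j$, with the convention that the trivial bound ($O(1)$) takes over when the exponent is negative.

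\textbf{Main obstacle.} Part (1) is elementary once Plancherel and Lang--Weil are in hand. In part (2), the pointwise bound on a dense open set, under the affine and connected type hypotheses, is standard given Weil II and the preservation of purity and perversity under closed immersion and Fourier transform. The real work lies in the stratification: tracking the exact codimension of each $X_j$, and verifying that the equations cutting them out can be chosen homogeneous, requires a careful projective compactification and a precise analysis of the singularities of $\mcF$ at infinity in the spirit of Katz--Laumon. Showing that the implicit constants depend only on~$\rank(A)$ requires uniform bounds on the generic rank of $\mcF$, which follow from the toric structure of $\Hh_A$ (the rank is controlled by the number of characters of a torus of dimension $d$).
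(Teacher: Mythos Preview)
Your proof of part~(1) matches the paper's: discrete Plancherel plus the elementary point count for split groups of multiplicative type.

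For part~(2), the paper's argument is considerably shorter than your sketch. Rather than building the stratification from the Fourier--Deligne formalism, the paper invokes the general stratification theorem of Fouvry and Katz \cite[Th.\,1.2]{fouvry-katz} as a black box, and the entire proof consists of verifying that theorem's hypotheses for the variety~$\Hh_A$: that $\Hh_A$ is closed in $\Aa^k_{\Zz}$ (affine type), that $\Hh_A(\Cc)$ is smooth and geometrically irreducible (connected type, hence a torus), and---crucially---that the so-called \emph{$A$-number} of~$\Hh_A$ is non-zero. This last condition is checked via \cite[Th.\,8.1]{fouvry-katz}, using the fact that $|\Hh_A(\Ff_q)|$ is coprime to~$q$ for all~$q$ since $\Hh_A$ is a torus.

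Your sketch captures the underlying mechanism correctly (purity and perversity of the Fourier transform of the constant sheaf on a smooth closed subvariety, then stratification of the non-lisse locus), but what you describe is essentially a sketch of the proof of the Fouvry--Katz theorem itself, and the work you flag in your ``Main obstacle'' paragraph is precisely what that theorem packages. The one genuine gap is the $A$-number condition: this is the hypothesis that governs the generic rank of the Fourier transform and ensures the strata~$X_j$ have the stated codimensions, and you do not address it. Without it, the Katz--Laumon machinery still produces a stratification, but not necessarily with the bounds claimed here. So your route is viable in principle, but the paper's shortcut via \cite{fouvry-katz} is both more efficient and isolates the one non-formal verification ($|\Hh_A(\Ff_q)|$ coprime to~$q$) that your argument is missing.
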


\begin{proof}
  (1) is a direct consequence of the discrete Plancherel formula and
  the fact that the cardinality of~$\Hh_A(\Ff_q)$ is
  $\sim q^{\dim(\Hh_A)}$ as $q\to +\infty$ if~$\Hh_A$ is connected (an
  elementary fact in our setting where we consider split tori).

  To prove~(2), note that the second statement implies the first with
  $U$ the complement of~$X_1$. To establish the latter, we need to
  invoke algebraic geometry, in the form of the result
  of~\cite[Th.\,1.2]{fouvry-katz} of Fouvry and Katz. We apply this to
  the algebraic variety~$\Hh_A$; since~$A$ is of affine type, the
  group~$\Hh_A$ is, as noted above, a closed subscheme
  of~$\Aa^k_{\Zz}$. It is an algebraic group of multiplicative type,
  and is connected since~$A$ is of connected type, so $\Hh_A(\Cc)$ is
  smooth and irreducible. Finally, it follows
  from~\cite[Th.\,8.1]{fouvry-katz} that the so-called $A$-number
  of~$\Hh_A$ is non-zero for~$q$ large enough (because~$\Hh_A$ is a
  torus, so that $|\Hh_A(\Ff_q)|$ is coprime to~$q$ for all~$q$). Thus
  all assumptions of the result of Fouvry and Katz are satisfied.
\end{proof}

\begin{remark}
  For future reference, we explain the origin of the exponential sums
  as trace functions in general (see~\cite{fkm-pisa} and~\cite{fkms}
  for general discussions of trace functions). We work over the finite
  field~$\Ff_q$ for a fixed prime~$q$. Pick a prime number
  $\ell\not=q$ and identify $\Qlb$ with~$\Cc$ by some fixed
  isomorphism. Let~$\psi$ be the additive $\Qlb$-valued character of
  $\Ff_q$ which is then identified with $x\mapsto e(x/q)$.

  The injective map $i\colon \Hh_A\to \Aa^k$ is a locally closed
  immersion (it is a closed immersion if all $a_{i,j}$ are
  non-negative). The formula defining $T_A(u;q)$ then shows that it
  coincides (up to a sign depending only on $A$) with the trace function
  over~$\Ff_q$ of Deligne's $\ell$-adic Fourier
  transform~$F_A=\ft_{\psi}(S_A)$, where
  $S_A=(i_!\Qlb)[\dim(\Hh_A)](\dim(\Hh_A))$.

  Since $i$ is an affine quasi-finite morphism, 
  the object~$S_A$ is a 
  perverse sheaf on~$\Aa^k$ (see~\cite[Cor.\,4.1.3]{bbdg}).  According
  to the formalism of Deligne's Fourier transform, this implies that
  $F_A$ is also a perverse sheaf 
  (see~\cite[Cor.\,2.1.5]{katz-laumon});

  Suppose now that $a_{i,j}\geq 0$ for all $i$ and~$j$, and that the gcd
  of the $a_{i,j}$ is one (as in the previous theorem). Then~$\Hh_A$ is
  (geometrically) connected, and the morphism~$i$ is a closed immersion,
  so the perverse sheaf~$S_A$ is in addition simple
  (see~\cite[Th.\,4.3.1]{bbdg}) and pure of weight~$0$ (because
  $i_*=i_!$ and because of Deligne's Riemann Hypothesis,
  see~\cite[5.1.14]{bbdg}), and so is~$F_A$
  (see~\cite[Th.\,2.2.1]{katz-laumon}); note that this means that we
  normalized the Fourier transform by a twist so that it preserves
  weights, in contrast with~\cite{katz-laumon}.

\end{remark}

Dually, we may construct using the matrix~$A$ the subgroups $X_A(\Ff_q)$
of $(\Dir{q})^k$ defined by the condition that
$\uple{\chi}\in X_A(\Ff_q)$ if
$$
\prod_{j=1}^k\chi_j^{a_{i,j}}=1
$$
for all~$i$. We can then form the subgroups
$$
X_A^{\perp}(\Ff_q)=\{x=(x_j)\in (\Ff_q^{\times})^k\,\mid\,
\uple{\chi}(x)=1\text{ for all }\uple{\chi}\in X_A(\Ff_q)\}.
$$

Elementary properties of the duality of finite abelian groups shows that
there exists a matrix~$B$ such that~$X_A^{\perp}(\Ff_q)=H_B(\Ff_q)$ for
all~$q$, and then~$H_B^{\perp}(\Ff_q)=X_A(\Ff_q)$ for all~$q$. Thus we
can pass back and forth between subgroups of $(\Ff_q^{\times})^k$ and
associated subgroups of~$(\Dir{q})^k$.

\begin{example}
  The example of Nordentoft~\cite{nordentoft} arises from the vector
  $A=(1,\ldots, 1)$ to construct $X_A(\Ff_q)$ which is the group of
  characters with $\chi_1\cdots \chi_k=1$. Then $X_A^{\perp}(\Ff_q)$ is
  the diagonal subgroup $\{(x,\ldots,x)\}\subset (\Ff_q^{\times})^k$ and
  corresponds to a matrix~$B$ with $\rank(B)=k-1$. The corresponding
  exponential sums are given by
  $$
  T_B(u;q)=\frac{1}{\sqrt{q}}\sum_{x\in\Ff_q^{\times}}
  e\Bigl(\frac{x(u_1+\cdots +u_k)}{q}\Bigr),
  $$
  and are therefore elementary (Ramanujan sums modulo~$q$).  Similarly,
  in the (slightly different) setting of of Khan, Milićević and Ngo,
  only elementary exponential sums arise. In the case of
  Bettin~\cite{bettin}, one can check that the relevant exponential sums
  are Kloosterman sums (for parameters of the form
  $(u_1+\cdots+u_k)u_{k+1}$).
\end{example}

In practice (this is the case in particular in Theorem~\ref{th-1}) we
often start with an integer~$d$ and an integral matrix $B=(b_{j,i})$
with $d$ columns and $k$ rows, and consider the subgroups of characters
$$
\widehat{H}(\Ff_q)\subset (\Dir{q})^k
$$
defined as the image of the map
$$
\varphi_B\colon (\chi_1,\ldots,\chi_d)\mapsto \Bigl(\prod_{i=1}^d
\chi_i^{b_{i,1}},\cdots, \prod_{i=1}^d \chi_i^{b_{i,k}}\Bigr).
$$

It is an elementary fact that there exists a (fixed) integral matrix~$A$
such that
$$
\widehat{H}(\Ff_q)=\widehat{H}_A(\Ff_q)
$$
for all $q$.  Note that the map $\varphi_B$ is not necessarily
injective, so that summing over $(\chi_1,\ldots,\chi_d)$ is not
equivalent to summing over the image of~$\varphi_B$. However, since the
number of pre-images of a character $\eta\in \varphi_B((\Dir{q})^d)$ is
independent of~$\eta$, this is only a cosmetic issue.


\subsection{The case of Theorem~\ref{th-1}}\label{ssec-exp-th1}

We take $k=2$ and $A=(a,b)\in\Zz^2$, $ab\not=0$. The group~$\Hh_A$ is
defined by the equation $x^ay^b=1$, and it has dimension~$1$; it is
connected if and only if $a$ and~$b$ are coprime.

For any prime number~$q$, we define the exponential sums
$T_{a,b}(u,v;q)$ by
$$
T_{a,b}(u,v;q)=\frac{1}{\sqrt{q}} \sum_{\substack{x,y \in
    \Ff_q^{\times}\\x^ay^b=1}}e\Bigl(\frac{ux+vy}{q}\Bigr),
$$
for $u$, $v\in\Ff_q$, and we define
$$
\widetilde{T}_{a,b}(u;q)= \frac{1}{\sqrt{q}} \sum_{\substack{x,y\in
    \Ff_q^{\times}\\x^ay^b=u}}e\Bigl(\frac{x+y}{q}\Bigr),
$$
for $u\in\Ff_q^{\times}$, so that
$T_{a,b}(u,v;q)=\widetilde{T}_{a,b}(u^av^b;q)$ for $u$ and $v$ in
$\Ff_q^{\times}$.

The relation with Gauss sums can be expressed here by the formula
\begin{equation}\label{eq-tab-gauss}
  \widetilde{T}_{a,b}(u;q)=\frac{\sqrt
    q}{q-1}\sum_{\chi\mods{q}}\eps(\chi^a)\eps(\chi^b) \overline{\chi(u)}
\end{equation}
for all~$u\in\Ff_q^{\times}$.

We note in passing that if $q\geq 3$, then we also have
\begin{equation}\label{eq-ab2a2b}
  \widetilde{T}_{a,b}(u;q)+\widetilde{T}_{a,b}(-u;q)=\widetilde{T}_{2a,2b}(u^2;q)
\end{equation}
for $u\in\Ff_q^{\times}$.

The algebraic structure of these exponential sums is different
depending on whether $a+b=0$ or not. In the former case, we have an
elementary result.

\begin{proposition}\label{pr-tama}
  Suppose that $a+b=0$. For any $u\in\Ff_q^{\times}$, we have
  $$
  \widetilde{T}_{a,b}(u;q)=\alpha_1+\alpha_2
  $$
  where
  \begin{gather*}
    \alpha_1=
    \begin{cases}
      \sqrt{q}&\text{ if } u=(-1)^a,\\
      0&\text{ otherwise},
    \end{cases}
    \\
    \alpha_2=\begin{cases}
      -\frac{(a,q-1)}{\sqrt{q}}&\text{ if
                                 $u\in(\Ff_q^{\times})^a$},\\
      0&\text{ otherwise}.
    \end{cases}
  \end{gather*}
\end{proposition}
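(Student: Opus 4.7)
The plan is to parametrize the equation $x^a y^{-a} = u$ explicitly and reduce the sum to an elementary one. Since $b = -a$, writing $t = x/y$ gives a bijection between pairs $(x,y) \in (\Ff_q^{\times})^2$ with $x^a y^{-a} = u$ and pairs $(t,y) \in (\Ff_q^{\times})^2$ with $t^a = u$, under which $x+y = (t+1)y$. This yields
$$
\widetilde{T}_{a,b}(u;q) = \frac{1}{\sqrt{q}} \sum_{\substack{t \in \Ff_q^{\times} \\ t^a = u}} \sum_{y \in \Ff_q^{\times}} e\Bigl(\frac{(t+1)y}{q}\Bigr).
$$
The inner complete sum over $y \in \Ff_q^{\times}$ equals $q-1$ when $t+1\equiv 0\mods{q}$ and $-1$ otherwise.

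It then remains to classify the roots of $t^a = u$. If $u \notin (\Ff_q^{\times})^a$ the outer sum is empty and $\widetilde{T}_{a,b}(u;q) = 0$, which agrees with $\alpha_1 = \alpha_2 = 0$. Otherwise there are exactly $d := (a, q-1)$ roots $t\in\Ff_q^\times$, and $t = -1$ is among them precisely when $u = (-1)^a$. If $u = (-1)^a$, the exceptional root $t=-1$ contributes $q-1$ while the remaining $d-1$ roots contribute $-1$ each, so
$$
\widetilde{T}_{a,b}(u;q) = \frac{1}{\sqrt{q}}\bigl( (q-1) - (d-1) \bigr) = \sqrt{q} - \frac{d}{\sqrt{q}},
$$
which matches $\alpha_1 + \alpha_2$. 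If $u \in (\Ff_q^{\times})^a$ with $u \neq (-1)^a$, all $d$ roots contribute $-1$, producing $-d/\sqrt{q} = \alpha_2$ with $\alpha_1 = 0$, as required.

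The only point of care is the consistency of the decomposition $\alpha_1+\alpha_2$: since $(-1)^a$ is always an $a$-th power in $\Ff_q^\times$, the clause defining $\alpha_2$ is automatically triggered whenever $\alpha_1$ is non-zero, so the two contributions genuinely add up. I expect no real obstacle beyond this bookkeeping: in the case $a+b=0$ the variety $x^ay^b=u$ degenerates into a union of lines indexed by the $a$-th roots of $u$, and the sum splits into elementary Ramanujan contributions without needing any of the deeper cohomological input (such as Theorem~\ref{th-fk}) that would be required when $a+b\neq 0$.
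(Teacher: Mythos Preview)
Your proof is correct, but it takes a different route from the paper's. The paper works on the dual side: it invokes the Gauss-sum expansion~(\ref{eq-tab-gauss}), uses the identity $\eps(\chi^a)\eps(\chi^{-a})=\chi^a(-1)$ for $\chi^a\not=1$ (and $=1/q$ otherwise), and then applies orthogonality of characters of~$\Ff_q^{\times}$ and of~$\Ff_q^{\times}/(\Ff_q^{\times})^a$ to split off the three contributions. Your argument instead parametrizes the variety $x^ay^{-a}=u$ directly via $t=x/y$ and reduces everything to the elementary Ramanujan sum $\sum_{y\in\Ff_q^{\times}}e((t+1)y/q)$. Your approach is more self-contained in that it avoids both~(\ref{eq-tab-gauss}) and the Gauss-sum identity, and it makes the geometric picture (a union of $(a,q-1)$ lines, one of which may pass through the ``pole'' $t=-1$) completely transparent; the paper's approach, on the other hand, fits more naturally into the general framework of Proposition~\ref{pr-exp-gen} and explains the terms $\alpha_1$, $\alpha_2$ as coming from distinct orthogonality relations rather than from a case split on~$u$.
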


\begin{proof}
  It is simpler here to use the formula~(\ref{eq-tab-gauss}) and the
  fact that $\eps(\chi^a)\eps(\chi^{-a})$ is equal to~$\chi^a(-1)$ if
  $\chi^a\not=1$ and to $1/q$ if~$\chi^a=1$. Thus
  $$
  \widetilde{T}_{a,b}(u;q)=\frac{\sqrt{q}}{q-1}\Bigl(
  \sum_{\chi}\chi^a(-1)\overline{\chi(u)}-
  \sum_{\chi^a=1}\chi^a(-1)\overline{\chi(u)}
  +\frac{1}{q}\sum_{\chi^a=1}\overline{\chi(u)}
  \Bigr).
  $$

  The first (resp. second and third) terms in this expression give the
  corresponding values in the statement by orthogonality of characters
  of~$\Ff_q^{\times}$ and of~$\Ff_q^{\times}/(\Ff_q^{\times})^a$.
\end{proof}

\begin{remark}
  From the point of view of trace functions, this case is actually
  rather delicate, since one should see $\widetilde{T}_{a,-a}(u;q)$ as
  the trace function of a perverse object, and not of a single sheaf,
  and this object is neither simple nor pure
  (see~\cite[Lemma\,8.4.8]{katz-esde} for what amounts to the case
  $a=-1$).
\end{remark}

The case $a+b\not=0$ gives rise to more interesting exponential
sums. We can view them as trace functions again, but the main property
that we will use also has a more elementary proof.

\begin{proposition}\label{pr-tab-weil}
  Suppose that $a+b\not=0$. We have
  $$
  |\widetilde{T}_{a,b}(u;q)|\leq |a|+|b|
  $$
  for all primes~$q\geq \max(|a|,|b|)^2$ and
  all~$u\in\Ff_q^{\times}$. Additionally, the bound above can be
  improved to $\max(|a|,|b|)$ if $a$ and $b$ have different signs.
\end{proposition}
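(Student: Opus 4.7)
The plan is to parametrize the fiber $\{x^{a}y^{b}=u\}\subset \Gg_m^2$ by $\Gg_m$ and thus reduce $\widetilde{T}_{a,b}(u;q)$ to a one-variable Laurent-polynomial exponential sum over $\Ff_q^{\times}$, to which Weil's bound applies; this is the elementary input alluded to just before the statement.

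First I would reduce to the coprime case. Writing $d=(a,b)$, $a=da_0$ and $b=db_0$ with $(a_0,b_0)=1$, the substitution $w=x^{a_0}y^{b_0}$ yields
$$
\widetilde{T}_{a,b}(u;q)=\sum_{\substack{w\in\Ff_q^{\times}\\ w^d=u}}\widetilde{T}_{a_0,b_0}(w;q),
$$
a sum with at most $d$ terms. It therefore suffices to prove the bound $|a_0|+|b_0|$ (resp.\ $\max(|a_0|,|b_0|)$ in the opposite-sign case) for coprime $(a_0,b_0)$. For such a pair, pick Bézout coefficients $sa_0+tb_0=1$; then $c\mapsto (w^s c^{b_0},w^t c^{-a_0})$ is a bijection from $\Ff_q^{\times}$ onto the fiber, with inverse $(x,y)\mapsto x^t y^{-s}$ (injectivity uses $(a_0,b_0)=1$). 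Substituting gives
$$
\widetilde{T}_{a_0,b_0}(w;q)=\frac{1}{\sqrt q}\sum_{c\in\Ff_q^{\times}}e\Bigl(\frac{f(c)}{q}\Bigr),\qquad f(c)=w^sc^{b_0}+w^tc^{-a_0}.
$$

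Next I would apply Weil's bound. Since $q$ is prime and $q\geq\max(|a|,|b|)^2$, the characteristic divides neither $a_0$ nor $b_0$. If $a_0$ and $b_0$ have the same sign, $f$ is a genuine Laurent polynomial with a pole of order $|a_0|$ at $0$ and order $|b_0|$ at $\infty$ (or the reverse), and the Weil--Bombieri bound for Laurent-polynomial exponential sums yields $|\sum_c e(f(c)/q)|\leq(|a_0|+|b_0|)\sqrt q$; summing over $w$ gives $|\widetilde{T}_{a,b}(u;q)|\leq d(|a_0|+|b_0|)=|a|+|b|$. In the opposite-sign case (with $a_0+b_0\not=0$), both exponents of $f$ share a common sign, and (after the bijective substitution $c\mapsto c^{-1}$ if needed) $f$ becomes a polynomial of degree $D=\max(|a_0|,|b_0|)\geq 1$ vanishing at $0$. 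The classical Weil bound $(D-1)\sqrt q$ for the sum over $\Ff_q$, plus the contribution $1$ at the removed origin, gives $|\sum_c e(f(c)/q)|\leq(D-1)\sqrt q+1\leq D\sqrt q$ as soon as $q\geq 1$, and summing over $w$ yields $\max(|a|,|b|)$.

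The main obstacle is essentially bookkeeping: verifying bijectivity of the parametrization (which relies on $(a_0,b_0)=1$ so that $\zeta^{a_0}=\zeta^{b_0}=1$ forces $\zeta=1$), checking that the hypothesis $q\geq\max(|a|,|b|)^2$ does yield $p\nmid a_0 b_0$ so that Weil applies, and confirming that in the opposite-sign case the polynomial obtained after inversion really has degree $\max(|a_0|,|b_0|)\geq 1$; this last point uses $a_0+b_0\not=0$, the degenerate case $a_0=-b_0$ being already handled by Proposition~\ref{pr-tama}.
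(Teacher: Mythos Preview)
Your proof is correct and follows essentially the same route as the paper's: both parametrize the fiber $\{x^ay^b=u\}$ by $\Gg_m$ (after handling the gcd), reduce to a one-variable Weil sum, and split into the same-sign (Laurent polynomial, poles at $0$ and $\infty$) and opposite-sign (genuine polynomial) cases. The only cosmetic difference is that you first factor $\widetilde{T}_{a,b}(u;q)=\sum_{w^d=u}\widetilde{T}_{a_0,b_0}(w;q)$ and then parametrize the coprime fiber by a single Bézout identity $sa_0+tb_0=1$, whereas the paper parametrizes the full fiber directly by $\Ff_q^\times\times\mmu_\delta(\Ff_q)$ via $\beta a+\alpha b=\delta$; these are equivalent organizations of the same computation.
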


\begin{proof}
  Let $\delta=(a,b)$ and pick integers $\alpha$, $\beta$ such that
  $$
  \beta a+\alpha b=\delta.
  $$
  \par
  It is then straightforward to check that the map
  $$
  \Ff_q^{\times}\times \mmu_{\delta}(\Ff_q)\to
  \{(x,y)\in(\Ff_q^{\times})^2\,\mid\, x^ay^b=1\}
  $$
  defined by
  $$
  (t,\rho)\mapsto (\rho^{\beta}t^{b/\delta},\rho^{\alpha} t^{-a/\delta})
  $$
  is a bijection with inverse
  $$
  (x,y)\mapsto (x^{\alpha}y^{-\beta},x^{a/\delta}y^{b/\delta}).
  $$
  \par
  Let now~$u\in\Ff_q^{\times}$. If $u$ is not of the form $x_0^ay_0^b$
  with $(x_0,y_0)\in(\Ff_q^{\times})^2$, then $\widetilde{T}_{a,b}(u;q)=0$.
  Otherwise, fix such representation $u=x_0^ay_0^b$. Then the above
  allows us to write
  $$
  \widetilde{T}_{a,b}(u;q)= \sum_{\rho^{\delta}=1}
  \frac{1}{\sqrt{q}}\sum_{t\in\Ff_q^{\times}}
  e\Bigl(\frac{x_0\rho^{\beta}t^{b/\delta}+y_0\rho^{\alpha}t^{-a/\delta}}{q}\Bigr).
  $$
  \par
  For each $\rho\in\Ff_q^{\times}$ such that $\rho^{\delta}=1$, the
  inner sum over~$t$ is a standard Weil sum with a rational
  function. The corresponding bounds are as follows (see,
  e.g.,~\cite[(3.5.2)]{deligne}), where we assume that $|a|/\delta<q$
  and $|b|/\delta<q$.

  \textbf{Case 1.} If $a$ and $b$ have the same sign, then the rational
  function has poles at~$0$ and~$\infty$, and the Weil bound is
  $$
  |\widetilde{T}_{a,b}(u;q)|\leq \frac{|a|+|b|}{\delta}.
  $$
  In fact, the sum is ``pure'' in that case, i.e., it is the sum of as
  many complex numbers of modulus~$1$ as the right-hand upper-bound.

  \par
  \textbf{Case 2.} If~$a$ is negative and~$b$ is positive, then the
  function
  $t\mapsto \rho^{\beta}t^{b/\delta}+\rho^{\alpha}t^{-a/\delta}$ is a
  \emph{polynomial} of degree $\max(|a|/\delta, b/\delta)$ (the assumption
  $a+b\not=0$ ensures there is no cancellation), so the Weil bound is
  $$
  \Bigl|\frac{1}{\sqrt{q}}\sum_{t\in\Ff_q}
  e\Bigl(\frac{\rho^{\beta}t^{b/\delta}+\rho^{\alpha}t^{-a/\delta}}{q}\Bigr)
  \Bigr|\leq \frac{\max(-a,b)}{\delta}-1,
  $$
  (and this sum is ``pure'') where the sum ranges over
  all~$t\in\Ff_q$. Thus, we get a bound
  $$
  |\widetilde{T}_{a,b}(u;q)|\leq
  \max(|a|-1,b-1)+\frac{\delta}{q^{1/2}}\leq
  \max(|a|,|b|)
  $$
  provided $q>\max(|a|,|b|)^2$.
  \par
  \textbf{Case 3.} If $a$ is positive and $b$ is negative, then the
  symmetry $\widetilde{T}_{-a,-b}(u;q)=\widetilde{T}_{a,b}(u^{-1};q)$,
  reduces the question to Case~2.
\end{proof}

\begin{remark}
  (1) This result is also proved by Pierce~\cite[\S\,3.1]{Pierce}, in
  the case $(a,b)=1$.
  \par
  (2) Elaborating this argument using the formalism of $\ell$-adic
  sheaves and trace functions, one can show that for any prime
  number~$q>\max(|a|,|b|)^2$, there exists a lisse sheaf~$\mcT_{a,b}$
  on~$\Gg_m$ over~$\Ff_q$ such that:
  \begin{itemize}
  \item The trace function of~$\mcT_{a,b}$ coincides with
    $\widetilde{T}(u;q)$;
  \item We have
    $$
    \rank(\mcT_{a,b})=\begin{cases}
      |a|+|b|&\text{ if } ab\geq 1\\
      \max(|a|,|b|)&\text{ if } ab\leq -1,
    \end{cases}
    $$
    and moreover the conductor (in the sense of~\cite{fkm1})
    of~$\mcT_{a,b}$ is $\ll 1$ for all~$q$;
  \item The sheaf~$\mcT_{a,b}$ is mixed of weights $\leq 0$, so that
    $$
    |\widetilde{T}(u;q)|\leq |a|+|b|
    $$
    for all~$u\in\Ff_q^{\times}$;
  \item If $a$ and $b$ have the same sign, then the sheaf~$\mcT_{a,b}$ is
    pure of weight~$0$. It is geometrically irreducible if and only if
    $(a,b)=1$.
  \end{itemize}

  This means in particular that we can apply to these exponential sums
  results such as those in~\cite{fkm1, fkm2}.
\end{remark}

\section{Counting solutions of toric congruences}\label{sec-diophantine}

\subsection{A general problem}

We consider first the general setting, as in
Section~\ref{sec-exp-general}, from which we borrow some notation.

Fix an integer~$k\geq 1$. The basic question is now, given a prime $q$
and a subgroup $H\subset (\Ff_q^{\times})^k$, to estimate the number
of elements of some non-empty box $B=I_1\times \cdots \times I_k$ in
$\Zz^k$, where $I_j$ is an interval, whose reduction modulo~$q$ lies
in~$H$.

More generally, it is natural to count solutions in cosets, since this
provides some potential flexibility in arguing by induction. Thus,
given an integral matrix~$A$ with $k$ columns (and an arbitrary finite
number of rows) and the subgroups~$\Hh_A(\Ff_q)$ defined
in~(\ref{eq-subgp}), we define
$$
M_A(u,B;q)=|\{x\in B\,\mid\, x\mods{q} \in u\Hh_A(\Ff_q)\}|
$$
for $u\in(\Ff_q^{\times})^k$, and we write simply $M_A(B;q)=M_A(1,B;q)$.

Concretely, this amounts to counting the solutions to a system of
congruences of the form
\begin{equation}\label{eq-congruence}
  x_1^{a_{i,1}}\cdots x_k^{a_{i,k}}\equiv y_i\mods{q}
\end{equation}
for all~$i$, with $(x_j)\in B$ and $y_i\in\Ff_q^{\times}$. We call such
systems ``toric congruences''.

Here also, some normalization plays an important role in applications to
toroidal families, and we define for this purpose
\begin{equation}\label{eq-congruences-norm}
  N_A(u,B;q)=\frac{M_A(u,B;q)}{\sqrt{|B|}},\quad\quad
  N_A(B;q)=\frac{M_A(1,B;q)}{\sqrt{|B|}}.
\end{equation}

One can expect the problem of estimating these quantities to be quite
difficult when the intervals $I_j$ have small size, whereas the
Riemann Hypothesis over finite fields can be applied to get an
asymptotic formula once they are big enough. Here is a sample result
of the latter kind, where we recall that matrices of affine and
connected types where defined before Theorem~\ref{th-fk}.

\begin{proposition}
  Assume that $A$ is of affine and connected type. Assume moreover
  that $\Hh_A(\Cc)$ is not contained in any of the sets defined by
  $x_i=x_j$ for some integers $i\not=j$, or $x_j=1$ for some
  integer~$j$.

  For any $\eps>0$, for any prime number $q$ and for any box of the type
  $B=\{1,\ldots,x\}^k$ with $1\leq x<q$, we have
  $$
  M_A(B;q)=\frac{|B|}{q^{\rank(A)}}+O\Bigl(q^{(k-\rank(A))/2+\eps}\Bigl(1+
  \frac{1}{q^{1/2}} \Bigl(\frac{|B|}{q^{1/2}}\Bigr)^{k-\rank(A)}
  \Bigr)\Bigr)
  $$
  where the implicit constant depends only on~$A$ and~$\eps$.
\end{proposition}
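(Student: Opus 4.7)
The plan is to apply additive Fourier inversion on $\Ff_q^k$, turning the counting problem into a bound on $T_A(u;q)$ weighted by the Fourier coefficients of $\charfun_B$. Set $d=k-\rank(A)=\dim\Hh_A$. Orthogonality of additive characters gives
$$\charfun_{\Hh_A(\Ff_q)}(x)=\frac{1}{q^k}\sum_{u\in\Ff_q^k}\widehat S_A(u)\,e\Bigl(\frac{x\cdot u}{q}\Bigr),\qquad \widehat S_A(u)=\sum_{y\in\Hh_A(\Ff_q)}e\Bigl(\frac{-y\cdot u}{q}\Bigr)=q^{d/2}\,\overline{T_A(-u;q)}.$$
Summing over $x\in B$ and isolating $u=0$ produces the main contribution $|\Hh_A(\Ff_q)|\,|B|/q^k$. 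Since $A$ is of affine and connected type, $\Hh_A$ is a split torus of dimension $d$, so $|\Hh_A(\Ff_q)|=(q-1)^d$, and this contribution equals $|B|/q^{\rank(A)}+O(|B|/q^{\rank(A)+1})$; the correction is easily absorbed into the claimed error.

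For the oscillatory part, I would bound $q^{d/2-k}\sum_{u\neq 0}|T_A(u;q)|\,|\widehat{\charfun_B}(u)|$ by appealing to the stratification of Theorem \ref{th-fk}(2). Decompose $\Ff_q^k\setminus\{0\}$ as the disjoint union of the sets $S_j=X_j(\Ff_q)\setminus X_{j+1}(\Ff_q)$ (with $X_0=\Aa^k$, $X_{k+1}=\emptyset$), so that $|S_j|\ll q^{k-j}$ and $|T_A(u;q)|\ll q^{\max(0,(j-1)/2)}$ on $S_j$. Combined with the Plancherel identity $\sum_u|\widehat{\charfun_B}(u)|^2=q^k|B|$, a first Cauchy--Schwarz on each stratum yields a uniform bound of order $q^{k-1/2+\eps}|B|^{1/2}$, giving a preliminary overall error $q^{d/2-1/2+\eps}|B|^{1/2}$. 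To reach the sharper form $q^{d/2+\eps}(1+q^{-1/2}(|B|/q^{1/2})^d)$ stated in the proposition, one refines this via a dyadic decomposition of the $u_j$ against the pointwise estimate
$$|\widehat{\charfun_B}(u)|\leq \prod_{j=1}^k\min\bigl(x,\,q/(2\|u_j\|)\bigr),$$
where $\|u_j\|$ denotes the distance from $u_j$ to $q\Zz$.

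The main obstacle lies in this refinement: the stratification bound is weakest on the small-codimension pieces $X_k,X_{k-1},\dots$, which lie close to the origin where the Fourier weights are largest. The geometric hypotheses on $\Hh_A(\Cc)$ are exactly what prevents these ``bad'' strata from absorbing too much Fourier mass. Indeed, if $\Hh_A(\Cc)\subset\{x_i=x_j\}$, then $T_A(u;q)$ depends on $(u_i,u_j)$ only through $u_i+u_j$, so $|T_A|$ would remain of maximal size along the whole line $(u_i,u_j)\mapsto(u_i+t,u_j-t)$ through the origin, crossing the region of largest Fourier weight and destroying any stratification gain; similarly if $\Hh_A(\Cc)\subset\{x_j=1\}$, then $|T_A(\cdot;q)|$ is invariant under translation in $u_j$. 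Ruling out both pathologies is precisely what makes the Fouvry--Katz stratification in Theorem \ref{th-fk}(2) effective on the part of $u$-space that dominates the Fourier sum, and once this is granted the proof reduces to the dyadic bookkeeping sketched above.
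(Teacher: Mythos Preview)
Your strategy—Fourier inversion, main term from $u=0$, and stratified bounds on $T_A(u;q)$—is exactly the machinery behind the Fouvry--Katz corollary that the paper cites, so you are on the right track. However, the paper's proof does not reconstruct that machinery: it invokes \cite[Cor.\,1.5]{fouvry-katz} directly and only checks the hypotheses. The substantive check is that the assumptions on $\Hh_A(\Cc)$ force $\Hh_A(\Cc)$ not to lie in \emph{any} affine hyperplane of~$\Cc^k$: since the coordinate functions $x\mapsto x_j$ and the constant~$1$ are characters of the torus $\Hh_A(\Cc)$, linear independence of characters means a hyperplane inclusion could only occur if two of these characters coincide, i.e., precisely in the excluded cases $x_i=x_j$ or $x_j=1$.

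Your write-up, by contrast, has a genuine gap at the ``refinement'' step. You correctly observe that if $\Hh_A(\Cc)\subset\{x_i=x_j\}$ or $\{x_j=1\}$ then $T_A(\cdot;q)$ acquires a translation invariance that ruins any gain, but this argues only \emph{necessity} of the hypothesis. What you need is \emph{sufficiency}: a concrete statement about how the strata $X_j$ of Theorem~\ref{th-fk}(2) sit relative to the coordinate hyperplanes in the dual $\Aa^k$, so that the dyadic decomposition of $\prod_j\min(x,\|u_j\|^{-1}q)$ interacts well with the dimension bounds $\dim X_j\leq k-j$. Homogeneity alone (which is all Theorem~\ref{th-fk}(2) records) does not give this; the passage from the stratification to the box-counting estimate is precisely the nontrivial content of \cite[Cor.\,1.5]{fouvry-katz}, and that is what you are missing. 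A secondary point: Theorem~\ref{th-fk}(2) is stated only for $u\in(\Ff_q^{\times})^k$, while your Fourier sum runs over all nonzero $u\in\Ff_q^k$; the contribution from $u$ with some vanishing coordinate must be handled separately.

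In short, either cite \cite[Cor.\,1.5]{fouvry-katz} and verify, as the paper does, that the geometric hypothesis is equivalent to ``not contained in an affine hyperplane'', or actually carry out the dyadic argument—which amounts to reproving that corollary.
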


Although the result is valid uniformly, it is of course only non-trivial
when $|B|$ is large enough.

\begin{proof}
  This is a direct application of the general
  result~\cite[Cor.\,1.5]{fouvry-katz} of Fouvry and Katz. We check the
  assumptions of this result:
  \par
  (1) since the matrix~$A$ is non-negative, the algebraic torus~$\Hh_A$
  is a closed subvariety of~$\Aa^k$; since the gcd of the coefficients
  is one, the variety $\Hh_A(\Cc)$ is connected;
  \par
  (2) by~\cite[Th.\,8.1]{fouvry-katz}, the so-called ``$A$-number''
  of~$\Hh_A$ over~$\Ff_q$ is non-zero for all primes~$q$ large enough;
  \par
  (3) finally, $\Hh_A(\Cc)$ is not contained in any affine hyperplane
  in~$\Cc^k$; indeed, if this were so, it would follow that the $k+1$
  characters
  $$
  x\mapsto x_j,\quad 1\leq j\leq k, \quad\quad x\mapsto 1
  $$
  of~$\Hh_A(\Cc)$ are linearly dependent. Since the characters of any
  group are linearly independent, this is only possible if two at least
  of these characters coincide, which contradicts our assumption.
\end{proof}

We will use another bound in the proof of Theorem~\ref{th-1}, which is
based on the geometry of numbers and applies to linear toric
congruences, i.e., congruences of the form
$x_i\equiv \rho x_j\mods{q}$ for some~$\rho\in\Ff_q^{\times}$. This
corresponds to
$$
A=(0,\ldots,0,1,0,\ldots,0,-1,0,\ldots,0),
$$
with a $1$ at the $i$-th position and a $-1$ at the $j$-th position.

\begin{proposition}\label{pr-geometry}
  Let~$k\geq 1$ be an integer. Let~$q$ be a prime number. Fix integers
  $i$, $j$ with $1\leq i\not=j\leq k$.  Let $u\in
  (\Ff_q^{\times})^k$. Let $\lambda_1$ be the minimum of the lattice
  $$
  \Lambda=\{y\in\Zz^k\,\mid\, u_iy_i\equiv u_jy_j\mods{q}\}\subset \Zz^k.
  $$

  For any box $B\subset \Zz^k$, we have
  $$
  |\{x\in B\,\mid\, u_ix_i\equiv u_j x_j \mods{q}\}| \ll
  \frac{|B|}{q}+\Bigl(\frac{\Delta}{\lambda_1}+1\Bigr)^{k-1},
  $$
  where $\Delta$ is the size of~$\partial B$ and the implicit constant
  depends only on~$k$.
\end{proposition}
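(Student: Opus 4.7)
The plan is to peel off the $k-2$ coordinates that do not appear in the congruence, and then invoke a standard two-dimensional lattice point count. Without loss of generality I take $(i,j)=(1,2)$ and set $\rho\equiv u_2u_1^{-1}\mods q$, so the condition becomes $x_1\equiv \rho x_2\mods q$. Then $\Lambda$ splits as $L\oplus\Zz^{k-2}$ where
$$
L=\{(y_1,y_2)\in\Zz^2\,\mid\, y_1\equiv \rho y_2\mods q\}
$$
is a rank-$2$ sublattice of covolume $q$. Let $\mu_1\geq 1$ denote the first minimum of $L$; then $\lambda_1=\min(\mu_1,1)\leq \mu_1$ (with equality when $k=2$, and $\lambda_1=1$ for $k\geq 3$ since the standard basis vectors $e_l$ for $l\geq 3$ belong to $\Lambda$).

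Writing $B=I_1\times\cdots\times I_k$ with $n_r=|I_r|$, the count to be bounded factorizes as $(n_3\cdots n_k)\cdot |(I_1\times I_2)\cap L|$. The key input is the classical two-dimensional lattice point estimate
$$
|(I_1\times I_2)\cap L|\ll \frac{n_1 n_2}{q}+\frac{n_1+n_2}{\mu_1}+1,
$$
which I would derive by passing to a Minkowski-reduced basis $(v_1,v_2)$ of $L$ (with $|v_1|=\mu_1$ and $|v_1|\cdot|v_2|\asymp q$); writing points of $L$ in terms of $(v_1,v_2)$ identifies $(I_1\times I_2)\cap L$ with the integer points of a parallelogram of area $n_1n_2/q$ contained in an axis-parallel bounding box of side lengths $\ll (n_1+n_2)/\mu_1$ and $\ll (n_1+n_2)\mu_1/q$.

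Multiplying through by $n_3\cdots n_k$ yields
$$
|\{x\in B\,\mid\, u_ix_i\equiv u_jx_j\mods q\}|\ll \frac{|B|}{q}+\frac{(n_1+n_2)n_3\cdots n_k}{\mu_1}+n_3\cdots n_k.
$$
Each of $n_1n_3\cdots n_k$ and $n_2n_3\cdots n_k$ is the cardinality of a facet of $B$, and $n_3\cdots n_k$ is bounded by either of these, so all three quantities are at most $\Delta$. Combined with $\mu_1\geq \lambda_1$, this gives the sharper bound $|B|/q+\Delta/\lambda_1+1$, which since $X+1\leq (X+1)^{k-1}$ for $X\geq 0$ and $k\geq 2$ is dominated by $|B|/q+(\Delta/\lambda_1+1)^{k-1}$, as required.

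The only genuinely non-elementary step is the two-dimensional lattice point estimate; once that is in place, everything else amounts to bookkeeping with side lengths and faces of $B$, and the main obstacle is really just choosing the right form of this classical lemma so that the slicing argument lines up cleanly with the boundary size $\Delta$ rather than, say, the diameter of $B$.
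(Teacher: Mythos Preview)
Your argument is correct and in fact yields the sharper linear bound $|B|/q+\Delta/\lambda_1+1$ before you weaken it to match the stated estimate. (One cosmetic slip: the formula $\lambda_1=\min(\mu_1,1)$ is wrong as written for $k=2$, but your parenthetical makes clear you mean $\lambda_1=\mu_1$ when $k=2$ and $\lambda_1=1$ when $k\geq 3$; only the inequality $\mu_1\geq\lambda_1$ is actually used.)

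The paper takes a different route: it does \emph{not} exploit the fact that the congruence involves only two of the $k$ coordinates, and instead applies a general $k$-dimensional Lipschitz principle from the geometry of numbers (Widmer, \emph{Trans.\ AMS} 362 (2010), Cor.~5.3) directly to the full lattice $\Lambda\subset\Zz^k$, bounding the orthogonality defect of $\Lambda$ uniformly in terms of $k$. Your approach is more elementary and self-contained (the only input is the classical two-dimensional count via a reduced basis), and it gives a stronger bound for this particular lattice; the paper's approach is more of a black-box citation but would apply verbatim to congruences genuinely involving more coordinates, where your slicing trick is unavailable.
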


\begin{proof}
  Write
  $$
  B=\prod_{j=1}^k\{a_j,a_j+1,\ldots,b_j\}\subset \Zz^k,
  $$
  and let
  $$
  B^0=\prod_{j=1}^k[a_j,b_j]\subset \Rr^k.
  $$

  We are therefore computing the size of $\Lambda\cap B^0$.  The set
  $B^0$ is convex and bounded, and its volume is $\ll |B|$.  The
  result thus follows from the estimate
  $$
  |\Lambda\cap B^0|\ll
  \frac{\vol(B^0)}{q}+\Bigl(\frac{\Delta}{\lambda_1}+1\Bigr)^{k-1},
  $$
  where the implicit constant depends only on~$k$, which is a case of
  the so-called Lipschitz Principle in the geometry of numbers. Among
  results of this type which imply this estimate, we appeal to one of
  Widmer~\cite[Cor.\,5.3]{widmer} (which is much more general).

  More precisely, the determinant of~$\Lambda$ is~$q$ under our
  assumptions on~$u$.  Denoting by~$\Omega$ the orthogonality defect
  of~$\Lambda$ (see~\cite[p.\,4805]{widmer}), and applying loc. cit., we
  get
  $$
  \Bigl|\,|\Lambda\cap B^0|-\frac{\vol(B^0)}{q}\Bigr| \leq 3^k\,(2k)\,
  \Bigl(\frac{\sqrt{k}\Omega \Delta}{\lambda_1}+1\Bigr)^{k-1},
  $$
  since the boundary of the box $B^0$ is
  the union of $2k$ faces, each of which is the image of a Lipschitz map
  $[0,1]^{k-1}\to \partial B^0$ with Lipschitz constant bounded by the
  constant~$\Delta$.  The bound $\Omega\ll 1$ holds
  by~\cite[Lemma\,4.4]{widmer}, where the implicit constant depends only
  on~$k$, and the proposition follows.
\end{proof}

Finally, we conclude with some speculations, maybe rather naive, but
which we hope can provide definite targets for investigations of general
toric congruences.

We consider a box $B$ where the sides $I_j$ are of the form
$I_j=\{1,\ldots, A_j\}$ for some positive integers $ A_j<q$. There are
some ``systematic'' sources of solutions of the toric
congruences~(\ref{eq-congruence}). These are provided by the non-trivial
morphisms
$$
\varphi\colon \Ff_q^{\times}\to (\Ff_q^{\times})^k
$$
of the form
$$
\varphi(x)=(x^{b_1},\ldots,x^{b_k})
$$
with $b_i\geq 0$, not all zero, whose image lies in~$\Hh_A$. This means
that $(b_i)$ is a non-zero and non-negative solution of the system of
linear equations
\begin{equation*}
  b_1a_{i,1}+\cdots +b_ka_{i,k}=0
\end{equation*}
for all~$i$. These homomorphisms correspond to algebraic group
morphisms $\Gg_m\to \Hh_A$, and only depend on the matrix~$A$.

Given such a morphism $\varphi$, we have
$\varphi(x)\in B\cap \Hh_A(\Ff_q)$ provided
$$
1\leq x\leq A_j^{1/b_j}\quad \text{ for all $j$ such that $b_j\geq 1$},
$$
where we use in an essential way the condition $b_j\geq 0$.

The kernel of the morphism $\varphi$ if of size $\ll 1$ for all $q$ (it
is the group of $\gcd(b_1,\ldots,b_k)$-th roots of unity
in~$\Ff_q^{\times}$). The morphism $\varphi$ therefore provides
$$
\gg \min_{b_j\geq 1}(A_j^{1/b_j})
$$
solutions to the system of congruences~(\ref{eq-congruence}) with
right-hand side $1$.

We now assume that $A_j\geq q^{\alpha_j}$ for some $\alpha_j>0$, so
that the systematic solutions associated to a given $\varphi$ are
increasingly numerous. We then let $M_A^{\rm syst}(B;q)$ denote the
number of elements of $B\cap \Hh_A(\Ff_q)$ which belong to the image of
a positive morphism (some solutions of the congruences might of course
lie in the image of more than one morphism). One may then speculate
that
$$
M_A(B;q)=M_A^{\rm syst}(B;q)+O\Bigl(q^{\eps}\Bigl(1+
\frac{|B|}{q^{k-\rank(A)}}\Bigr)\Bigr)
$$
for any $\eps>0$ (the quantity $|B|/q^{k-\rank(A)}$ on the right-hand
side is of probabilistic nature: the probability for a uniformly random
element of $(\Ff_q^{\times})^k$ to belong to $\Hh_A(\Ff_q)$ is about
$q^{-\dim(\Hh_A)}=q^{-(k-\rank(A))}$).

\begin{remark}
  (1) There is a vague analogy here with the Pila--Wilkie counting
  theorem and the distinction between algebraic and transcendental parts
  of definable sets in o-minimal structures (see, e.g., the recent
  survey of Bhardwaj and van den Dries~\cite{bvdd} for a discussion of
  the Pila--Wilkie theorem and its applications).
  \par
  (2) If the matrix $A$ is non-negative, so that $a_{i,j}\geq 0$, then
  we have $M_A^{\rm syst}(B;q)=0$.
\end{remark}

\subsection{The case of Theorem~\ref{th-1}}\label{ssec-congruences}

We now discuss the example which is relevant for
Theorem~\ref{th-1}.

We have now $k=2$ and $A=(a,b)\in\Zz^2$ with $ab\not=0$.  For~$q$ prime,
the subgroup $\Hh_A(\Ff_q)$ is the subgroup of $(\Ff_q^{\times})^2$ given
by the equation $x^ay^b=1$.

A trivial bound is
$$
M_{a,b}(u,I\times J;q)\ll |I|\Bigl(\frac{|J|}{q}+1\Bigr), \quad\quad
M_{a,b}(u,I\times J;q)\ll |J|\Bigl(\frac{|I|}{q}+1\Bigr),
$$
(indeed, for the first, note that for each~$m\in I$, the possible~$n$
such that $m^an^b\equiv 1\mods{q}$ lie in the intersection of~$J$ with
at most~$b$ arithmetic progressions modulo~$q$). In particular
\begin{equation}\label{eq-trivial}
  N_{a,b}(u,I\times J;q)\ll \frac{\sqrt{|I|}}{\sqrt{|J|}}
  \Bigl(\frac{|J|}{q}+1\Bigr).
\end{equation}

The following is rather deeper, and is related to the methods used to
prove the Burgess bound.

\begin{theorem}[Pierce]\label{thmpierce}
  Suppose that $b\not=0$ and that $a/b$ is not a negative integer.  Let
  $k\geq 1$ be an integer. If $M$ and~$N$ are positive integers $<q$
  such that
  \begin{equation*}
    M\leq \demi q^{\frac{k+1}{2k}},\ N\leq \tfrac{q}{4},
  \end{equation*}
  then we have
  $$
  M_{a,b}([1,M]\times[1,N];q)\ll M^{\frac{k}{k+1}}N^{\frac{1}{2k}}(\log
  q)^{1/(2k)}
  $$
  for all primes~$q$, where the implicit constant depends only
  on~$a$, $b$ and~$k$.
\end{theorem}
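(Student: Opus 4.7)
The plan is Burgess-type, adapted to the toric congruence $x^a y^b \equiv 1 \mods q$, following Pierce's original approach. Multiplicative orthogonality on $\Ff_q^{\times}$ recasts the count as
\begin{equation*}
M_{a,b}([1,M]\times[1,N];q) = \frac{1}{q-1}\sum_{\chi \mods q} S_M(\chi^a)\,S_N(\chi^b), \qquad S_L(\psi) := \sum_{1 \leq x \leq L}\psi(x).
\end{equation*}
The trivial-character contribution $MN/(q-1)$ is absorbed into the target in the given parameter range, so only the off-trivial sum needs to be estimated.

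Second, I would apply the Burgess shift to the factor $S_M(\chi^a)$ of length $M$. Using the multiplicativity of $\chi^a$ together with the near-invariance of $S_M$ under the additive translation $x \mapsto x+rs$ with $rs \ll M$, one writes
\begin{equation*}
S_M(\chi^a) = \frac{1}{RS}\sum_{r \leq R}\sum_{s \leq S}\chi^a(s)\sum_{u \in U_s}\chi^a(u+r) + O(RS),
\end{equation*}
where $U_s \subset \Ff_q^{\times}$ is the image of $[1,M]$ under $x \mapsto x/s$. Applying Hölder's inequality of order $2k$ in the $u$-variable, then summing over $\chi$ and combining with the Parseval bound $\sum_\chi|S_N(\chi^b)|^2 \ll_b qN$ for the companion factor via an additional Cauchy--Schwarz, the original problem reduces to estimating the completed $2k$-th moment $\sum_\chi\sum_u\bigl|\sum_{r \leq R}\chi^a(u+r)\bigr|^{2k}$, and hence to counting $2k$-tuples $(r_i,r_j') \in [1,R]^{2k}$ satisfying $\prod_i(u+r_i) \equiv \rho \prod_j(u+r_j') \mods q$ for some $\rho$ in the $(a,q-1)$-torsion of $\Ff_q^{\times}$, uniformly in $u$.

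Third, these multiplicative incidences are bounded by classifying pairs according to the integer parameter $t := (\prod_i(u+r_i) - \rho^{*}\prod_j(u+r_j'))/q$. The diagonal $t=0$ is controlled by the Dirichlet divisor estimate $\sum_{n \leq R^k}\tau_k(n)^2 \ll R^k(\log q)^{O_k(1)}$, while the off-diagonal $t \neq 0$ is handled by the cruder $\sum \tau_k$-bound, giving a total of $\ll (qR^k + R^{2k})\log q$. The hypothesis that $a/b$ is not a negative integer is essential here: otherwise the curve $x^a y^b = 1$ would contain a one-parameter family $(x, \rho x^{-a/b})$ of small integer solutions that would overwhelm the divisor heuristic. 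Optimizing the Burgess parameters ($RS \asymp M$ and $R \asymp q^{1/(k+1)}$) then yields the target bound $M^{k/(k+1)}N^{1/(2k)}(\log q)^{1/(2k)}$, with the range $M \leq \tfrac12 q^{(k+1)/(2k)}$ arising precisely from the balance between the two terms in the moment estimate. The principal obstacle is this final moment step: the careful bookkeeping of shifts and error terms needed to achieve exactly the logarithmic exponent $1/(2k)$ claimed in the theorem, rather than a polylog with larger exponent depending on $k$.
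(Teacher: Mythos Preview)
The paper does not prove this theorem: its entire argument is the single sentence ``This is a special case of~\cite[Th.\,4]{Pierce} for~$q$ prime,'' plus a remark on sign conventions. So there is no in-paper proof to compare against; you are attempting to reconstruct Pierce's original argument, which is more than the paper does.

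Your overall plan---Burgess amplification applied to the toric congruence---is in the right family, and the opening orthogonality identity is correct. But there is a genuine gap at the step that is supposed to produce the exponent~$N^{1/(2k)}$. After the Burgess shift and H\"older of order~$2k$ on the $S_M(\chi^a)$ side, you propose to separate off $S_N(\chi^b)$ by Cauchy--Schwarz together with the Parseval bound $\sum_\chi|S_N(\chi^b)|^2\ll_b qN$. Any such separation can only extract~$N^{1/2}$ from the $N$-variable: the sole input on the $S_N$ side is an $L^2$ bound, and no choice of H\"older exponents in the $\chi$-sum improves the resulting factor $\bigl(\sum_\chi|S_N(\chi^b)|^2\bigr)^{1/2}\ll (qN)^{1/2}$ to $N^{1/(2k)}$. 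Pierce's argument is not organised as ``Burgess on $S_M$ times $L^2$ on $S_N$''; the $2k$-th power and the divisor-type mean value are applied to short sums built from the $n$-variable, which is how the exponent $N^{1/(2k)}$ and the factor $(\log q)^{1/(2k)}$ actually arise.

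Your third paragraph also conflates two different moments. The quantity you write, $\sum_\chi\sum_u\bigl|\sum_{r\leq R}\chi^a(u+r)\bigr|^{2k}$, is the standard Burgess moment in the shift variable $r$, and it is controlled either by Weil's theorem (for fixed~$\chi$) or, after summing over~$\chi$, by counting roots of a polynomial in~$u$---not by a $\tau_k$-estimate over integers up to~$R^k$. The parameter $t=(\prod_i(u+r_i)-\rho^{*}\prod_j(u+r'_j))/q$ is not well-defined as an integer, since $\rho^{*}$ lives in~$\Ff_q^{\times}$ and has no canonical integer lift. Finally, the hypothesis that $a/b$ is not a negative integer does not enter at this moment step; in Pierce's proof it is used earlier, when setting up the amplification, to guarantee that the relevant map on residues is genuinely mixing under the Burgess shift.
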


\begin{proof}
  This is a special case of~\cite[Th.\,4]{Pierce} for~$q$ prime,
  noting that Pierce's equations are written in the form $x^a=y^b$,
  hence our condition on $(a,b)$ is the opposite of that in loc. cit.;
  the other condition $(b,q)=1$ in loc. cit.  is true for all large
  enough~$q$.
\end{proof}


On the other hand, Proposition~\ref{pr-geometry} implies the
following result.

\begin{proposition}\label{pr-lattice2}
  Let~$q$ be a prime number. For any $\alpha\in\Ff_q^{\times}$, and any
  box~$B=I\times J\subset \Zz^2$, we have
  $$
  M_{1,-1}((\alpha,1),I\times J;q)\ll \frac{|B|}{q}+\frac{|I|+|J|}{\lambda}+1
  $$
  where $\lambda$ is the minimum of the lattice
  $$
  \Lambda=\{(m,n)\in\Zz^2\,\mid\, m\equiv \alpha n\mods{q}\}.
  $$
\end{proposition}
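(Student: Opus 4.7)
The plan is to recognize Proposition \ref{pr-lattice2} as the special case $k=2$ of Proposition \ref{pr-geometry}, and simply apply the latter after matching up notation.

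First I would set $k=2$, $i=1$, $j=2$, and $u=(1,\alpha)\in(\Ff_q^{\times})^2$. With these choices, the congruence $u_i x_i \equiv u_j x_j \pmod q$ becomes $x_1 \equiv \alpha x_2 \pmod q$, which is exactly the condition defining $\Hh_{(1,-1)}((\alpha,1),\cdot)$ in the notation of $M_{1,-1}$; and the lattice
$$
\{y\in\Zz^2 \,\mid\, u_1 y_1\equiv u_2 y_2\mods{q}\}=\{(m,n)\in\Zz^2\,\mid\, m\equiv \alpha n\mods{q}\}
$$
coincides with $\Lambda$, so $\lambda_1 = \lambda$. Next I would observe that for $B=I\times J$, the boundary $\partial B$ as defined in the notation section is the union of the four ``faces'' $\{a_1\}\times J$, $\{b_1\}\times J$, $I\times \{a_2\}$, $I\times \{b_2\}$, so its size is $\Delta = 2(|I|+|J|) \asymp |I|+|J|$.

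Applying Proposition \ref{pr-geometry} with these choices and $k=2$ then yields
$$
M_{1,-1}((\alpha,1),I\times J;q)\ll \frac{|B|}{q}+\Bigl(\frac{\Delta}{\lambda}+1\Bigr)^{k-1}=\frac{|B|}{q}+\frac{|I|+|J|}{\lambda}+1,
$$
which is exactly the claimed bound.

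There is essentially no obstacle here, since Proposition \ref{pr-geometry} already absorbs all the nontrivial work (namely Widmer's version of the Lipschitz principle in the geometry of numbers). The only matter of care is the bookkeeping check that the lattice appearing in Proposition \ref{pr-geometry}, with determinant $q$ under the hypothesis $u\in(\Ff_q^{\times})^k$, matches $\Lambda$, and that $\Delta$ for a two-dimensional box reduces to $|I|+|J|$ up to an absolute constant.
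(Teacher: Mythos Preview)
Your proposal is correct and matches the paper's approach exactly: the paper states Proposition~\ref{pr-lattice2} without proof, simply noting that Proposition~\ref{pr-geometry} implies it, and you have spelled out the routine verification that the choices $k=2$, $u=(1,\alpha)$, and $\Delta\ll |I|+|J|$ make this implication go through.
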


\begin{remark}
  (1) The motivation in the paper of Pierce is the study of class
  groups of imaginary quadratic fields. In this respect, we may also
  mention another work of Heath--Brown and
  Pierce~\cite[\S\,4]{hb-pierce}, where toric congruences also appear
  and are studied in part through lattice-point counting methods.

  (2) Special cases of these toric congruences appeared in earlier
  work of Heath--Brown~\cite{hb} concerning squarefree numbers in
  arithmetic progressions, and more recently in related work of
  Nunes~\cite{nunes}.
\end{remark}

\section{Beginning of the proof}

We begin the proof of Theorem~\ref{th-1} with some general
preliminaries. For any Dirichlet character $\chi$ modulo~$q$, we denote
$t(\chi)=(1-\chi(-1))/2$.

Let~$(a,b)\in\Zz^2$ be integers with $ab\not=0$.  Let~$q$ be a prime
number. We intend to compute
$$
M_{a,b}(q)=
\frac{1}{q-1}\sum_{\chi\mods{q}}L(\demi,\chi^a)L(\demi,\chi^b).
$$

For any Dirichlet character~$\chi$ modulo~$q$, we have
$$
L(s,\chi^a)L(s,\chi^b)=\sum_{m,n\geq
  1}\frac{\chi(m^an^b)}{(mn)^s},\quad
\quad
\Reel(s)>1.
$$
\par
Suppose that~$\chi^a\not=1$ and~$\chi^b\not=1$, so that the
$L$-functions $L(s,\chi^a)$ and $L(s,\chi^b)$ have functional equations
with signs
$$
i^{-t(\chi^a)}\eps(\chi^a),\quad\quad i^{-t(\chi^b)}\eps(\chi^b),
$$
respectively.  For any positive real numbers~$X$ and~$Y$ such that
$XY=q^2$, the approximate functional equation for the product
$L$-function gives the formula
\begin{equation}\label{EFA}
L(\demi,\chi^a)L(\demi,\chi^b)=\sum_{m,n\geq
  1}\frac{\chi(m^an^b)}{\sqrt{mn}} V_{a,b,\chi}\Bigl(\frac{mn}{X}\Bigr) +
\frac{\eps(\chi^a)\eps(\chi^b)}{i^{t(\chi^a)+t(\chi^b)}} \sum_{m,n\geq
  1}\frac{\bar{\chi}(m^an^b)}{\sqrt{mn}} V_{a,b,\chi}\Bigl(\frac{mn}{Y}\Bigr)
\end{equation}
where the function $V_{a,b,\chi}$ has rapid decay at infinity and only
depends on~$(\chi^a(-1),\chi^b(-1))$.

To be precise, we apply~\cite[Th.\,5.3]{IK} to the $L$-function
$L(s,\chi^a)L(s,\chi^b)$ with conductor~$q^2$ and gamma factor
$$
\gamma_{a,b}(s)=\pi^{-s} \Gamma\Bigl(\frac{s+t(\chi^a)}{2}\Bigr)
\Gamma\Bigl(\frac{s+t(\chi^b)}{2}\Bigr),
$$
and with the parameter~$X$ in loc. cit. replaced by~$X/\sqrt{q}$.
Furthermore, we choose a test function $G$ such that
\begin{equation}\label{condforG}
  \begin{cases} G
    \text{ is holomorphic and bounded in  the strip} -4 < \Reel(u) < 4,\\
    G \text{ is even,} \\
    G(0)=1.
\end{cases}
\end{equation}

Then the formula~\eqref{EFA} holds with
\begin{equation*} 
  V_{a,b,\chi}(y)=\frac{1}{2i\pi}\int_{(3)} y^{-u}
  G(u)\frac{\gamma_{a,b}(\demi+u)}{\gamma_{a,b}(\demi)}\frac{du}{u}.
\end{equation*}

Note that we can define $V_{a,b,\chi}$ by the same formula even if
either $\chi^a$ or $\chi^b$ is trivial.  For any $A>0$, the following
bounds
\begin{gather}
  \label{eq-r1}
  V_{a,b,\chi}(y)=1+O(y^{A})
  \\
  \label{eq-r2}
  V_{a,b,\chi}(y)\ll y^{-A}
\end{gather}
hold for $y>0$ (see~\cite[Prop.\,5.4]{IK}).

For \emph{any} character $\chi$, including the case where $\chi^a$ or
$\chi^b$ is trivial, we have
$$L(\demi,\chi^a)L(\demi,\chi^b)\ll q^{1/2}$$ (by the convexity bound,
see e.g.~\cite[Th.\,5.23]{IK}).  Moreover, for any~$\eps>0$ and $X$,
$Y\geq 1$, we have
\begin{gather*}
  \sum_{m,n\geq 1}\frac{\chi(m^an^b)}{\sqrt{mn}}
  V_{a,b,\chi}\Bigl(\frac{mn}{X}\Bigr) \ll X^{1/2+\eps}
  \\
  \eps(\chi^a)\eps(\chi^b)\sum_{m,n\geq
    1}\frac{\bar{\chi}(m^an^b)}{\sqrt{mn}}
  V_{a,b,\chi}\Bigl(\frac{mn}{Y}\Bigr) \ll Y^{1/2+\eps}
\end{gather*}
(since $|\eps(\chi)|\leq 1$ for all characters). Thus, bringing back the
contributions of the characters where either $\chi^a$ or $\chi^b$ is
trivial, we have
\begin{multline}\label{eq-first-decomp}
  M_{a,b}(q)=\frac{1}{q-1} \sum_{\chi\mods{q}} \Bigl\{\sum_{m,n\geq
    1}\frac{\chi(m^an^b)}{\sqrt{mn}}
  V_{a,b,\chi}\Bigl(\frac{mn}{X}\Bigr)
  \\
  +
  \frac{\eps(\chi^a)\eps(\chi^b)}{i^{t(\chi^a)+t(\chi^b)}}\sum_{m,n\geq
    1}\frac{\bar{\chi}(m^an^b)}{\sqrt{mn}}
  V_{a,b,\chi}\Bigl(\frac{mn}{Y}\Bigr) \Bigr\}
  \\
  +O(q^{-1/2}+q^{-1}(X^{1/2+\eps}+Y^{1/2+\eps})),
\end{multline}
where the implicit constant depends on~$a$, $b$ and~$\eps$.

We will assume that~$q\geq 3$ and focus on the contribution,
denoted~$M^{\rm even}_{a,b}(q)$, of the even characters to the
sum. Note that we then have $t(\chi^a)=t(\chi^b)=0$, and
$V_{a,b,\chi}$ is independent of~$\chi$, and is denoted simply~$V$. We
have therefore
$$
M^{\rm even}_{a,b}(q)= \frac{1}{q-1}
\sum_{\substack{\chi\mods{q}\\\chi\text{ even}}} \Bigl\{\sum_{m,n\geq
  1}\frac{\chi(m^an^b)}{\sqrt{mn}} V\Bigl(\frac{mn}{X}\Bigr) +
\eps(\chi^a)\eps(\chi^b)\sum_{m,n\geq
  1}\frac{\bar{\chi}(m^an^b)}{\sqrt{mn}} V\Bigl(\frac{mn}{Y}\Bigr)
\Bigr\}.
$$

\begin{remark}
  Note that the sum over even characters can also be interpreted as
  the restriction to the toroidal family which is the image of the
  morphism $\chi\mapsto \chi^2$.
\end{remark}

By orthogonality of Dirichlet characters modulo~$q$, combined
with~(\ref{eq-tab-gauss}) and~(\ref{eq-ab2a2b}), we obtain
\begin{multline}\label{eq-even-terms}
  M^{\rm even}_{a,b}(q)=\frac{1}{q-1}\sum_{\chi\mods
    q}\frac{1}{2}(1+\chi(-1))
  \\
  \Bigl\{\sum_{m,n\geq 1}\frac{\chi(m^an^b)}{\sqrt{mn}}
  V\Bigl(\frac{mn}{X}\Bigr) + \eps(\chi^a)\eps(\chi^b)\sum_{m,n\geq
    1}\frac{\bar{\chi}(m^an^b)}{\sqrt{mn}} V\Bigl(\frac{mn}{Y}\Bigr)
  \Bigr\}
  \\
  =N_{a,b}(X)+P_{a,b}(Y),
\end{multline}
where
$$
N_{a,b}(X)=\frac{1}2\sum_{\substack{m,n\geq 1\\(m^an^b)^2\equiv 1\mods
    q}}\frac{1}{\sqrt{mn}}V\Bigl(\frac{mn}{X}\Bigr)
$$
and
\begin{align*}
  P_{a,b}(Y)&=\frac1{2\sqrt q}\sum_{\substack{m,n\geq
      1\\(mn,q)=1}}\frac{1}{\sqrt{mn}}
(\widetilde{T}_{a,b}(m^an^b;q)+\widetilde{T}_{a,b}(-m^an^b;q))
V\Bigl(\frac{mn}{Y}\Bigr)\\
&=\frac1{2\sqrt q}\sum_{m,n\geq
  1}\frac{1}{\sqrt{mn}}\widetilde{T}_{2a,2b}(m^{2a}n^{2b};q)
V\Bigl(\frac{mn}{Y}\Bigr).
\end{align*}

We see that the toric congruences of Section~\ref{ssec-congruences}
appear in~$N_{a,b}(X)$, and that the exponential sums of
Section~\ref{ssec-exp-th1} appear in~$P_{a,b}(Y)$. The treatment of
these terms now varies depending on whether $a+b=0$ or not, the key
difference being how to handle the first term, and in particular how
to extract the main term.

Before proceeding, we observe that
$$
M_{a,b}(q)=\overline{M_{-a,-b}(q)},
$$
which allows us to assume from now on that $a$ is positive.

\begin{remark}\label{rm-general}
  More generally, let~$A$ be an integral matrix~$A$ with $k$ columns
  as in Sections~\ref{sec-exp-sums} and~\ref{sec-diophantine}, and
  define the subroups $\Hh_A^{\perp}(\Ff_q)$ as
  in~(\ref{eq-perp}). Then the toroidal average
  $$
  \frac{1}{|\Hh_A^{\perp}(\Ff_q)|} \sum_{\uple{\chi}\in \Hh_A^{\perp}(\Ff_q)}
  \prod_{j=1}^kL(\demi,\chi_j)
  $$
  associated to~$A$ will essentially be a combination of expressions of
  the type
  $$
  \sum_{\substack{m_1,\ldots,m_k\geq 1\\ (m_1,\ldots,m_k)\mods{q}\in
      \Hh_A(\Ff_q)}} \frac{1}{\sqrt{m_1\cdots m_k}}
  V_{\uple{\chi}}\Bigl(\frac{m_1\cdots m_k}{X}\Bigr)
  $$
  and
  $$
  q^{\rank(A)/2} \sum_{m_1,\ldots,m_k\geq
    1}\frac{T_{A}(m_1,\ldots,m_k;q)}{\sqrt{m_1\cdots m_k}}
  V_{\uple{\chi}}\Bigl(\frac{m_1\cdots m_k}{Y}\Bigr),
  $$
  for suitable test functions~$V_{\uple{\chi}}$, with $XY=q^k$
  (depending on the parity of the characters, as usual).
\end{remark}

\section{Proof of Theorem~\ref{th-1} for $a+b\not=0$}

\subsection{The main term when $a$, $b$ are positive}

We assume here that $a$ and~$b$ are positive.  The toric congruence
\begin{equation}
  \label{eqcongruence}
  (m^an^b)^2\equiv 1\mods q
\end{equation}
has the solution $(m,n)=(1,1)$, which contributes
$$
\frac{1}{2} V\Bigl(\frac{1}{X}\Bigr)=\frac12+O(X^{-1/2})
$$
to~$N_{a,b}(X)$ (see~(\ref{eq-r1})).  For any other solution~$(m,n)$ of
the congruence with $m$, $n$ positive, we have $mn>1$, hence
$$
\max(m,n)\geq (q-1)^{\frac{1}{a+b}}.
$$

Fix some~$\delta>0$ to be chosen later. Using a dyadic partition of
unity and the rapid decay of~$V$ (see~(\ref{eq-r2})), it follows that
\begin{equation}\label{eq-a0}
  \sum_{\substack{mn>1\\(m^an^b)^2\equiv 1\mods
      q}}\frac{1}{\sqrt{mn}}V\Bigl(\frac{mn}{X}\Bigr)
  \ll (\log q)^2\max_{I,J} N_{2a,2b}(I\times J;q)+O(X^{-1})
\end{equation}
(with notation as in~(\ref{eq-congruences-norm})), where~$(I,J)$ runs
over pairs of intervals of integers $n\leq X$ such that
$$
\max(|I|,|J|)\geq \demi q^{1/(a+b)},\quad\quad |I|\,|J|\ll X^{1+\delta}.
$$

Fix one such pair $(I,J)$. We distinguish two cases.

\textbf{Case 1.} If $\max(|I|,|J|)\geq q/4$, we can use the
bound~(\ref{eq-trivial}) to get
\begin{equation}\label{eq-a1}
  N_{2a,2b}(I\times J;q)\ll
  \frac{(|I||J|)^{1/2}}{q}\ll\frac{X^{1/2+\delta/2}}{q}.
\end{equation}

\par
\textbf{Case 2.} If $\max(|I|,|J|)<q/4$, then we will appeal to
Theorem~\ref{thmpierce}, with~$k=2$. Since~$a/b>0$, we may assume that
$|J|\geq |I|$, by switching the coordinates if needed. We then have
$\sqrt{|I||J|}\leq |J|$.

Under the assumption that
\begin{equation}\label{eq-condition}
  |I|\leq \frac{1}{4} q^{3/4},
\end{equation}
Theorem~\ref{thmpierce} implies the estimate
$$
N_{2a,2b}(I\times J;q)\ll (\log q)^{1/4} |I|^{2/3-1/2}|J|^{1/4-1/2}=(\log
q)^{1/4}|I|^{1/6}|J|^{-1/4}.
$$

Since $|I||J|\leq |J|^2$, we have then
$$
|I|^{1/6}|J|^{-1/4} =(|I||J|)^{1/6}|J|^{-5/12}\leq
(|I||J|)^{1/6-5/24}=(|I||J|)^{-1/24}\leq \max(|I|,|J|)^{-1/24},
$$
and hence 
\begin{equation}\label{eq-a2}
  N_{2a,2b}(I\times J;q)\ll (\log q)^{1/4}  \max(|I|,|J|)^{-1/24}\ll
  (\log q)^{1/4}q^{-1/(24(a+b))}.
\end{equation}
  
From the combination of~(\ref{eq-a0}),~(\ref{eq-a1}) and~(\ref{eq-a2}),
we obtain
$$
\sum_{\substack{mn>1\\(m^an^b)^2\equiv 1\mods
    q}}\frac{1}{\sqrt{mn}}V\Bigl(\frac{mn}{X}\Bigr) \ll (\log q)^2
\Bigl(q^{-1} X^{(1+\delta)/2}+(\log q)^{1/4}q^{-1/(24(a+b))}\Bigr)+X^{-1},
$$
if the condition~(\ref{eq-condition}) is satisfied for all relevant
intervals~$I$ and~$J$.  Since these satisfy
$$
|I|\leq (|I||J|)^{1/2}\ll X^{(1+\delta)/2}
$$
and since $1-\delta<1/(1+\delta)$ if $0<\delta<1$, the
condition~(\ref{eq-condition}) is indeed satisfied, provided $q$ is
large enough and
$$
X\ll q^{3(1-\delta)/2}.
$$

This last estimate also implies that
$X^{(1+\delta)/2}q^{-1}\ll q^{-(1-3\delta)/4}$, and we conclude that
if $\delta$ is fixed and $0<\delta<1$, and if
$X\ll q^{3(1-\delta)/2}$, then we have
\begin{equation}\label{eq-mt1}
  N_{a,b}(X)=\frac{1}{2}+O(q^{-1/(24(a+b))+\eps}+X^{-1})
\end{equation}
for any $\eps>0$, where the implicit constant depends on~$a$, $b$
and~$\eps$.

\subsection{The main term when $a$ is positive and $b$ is negative}

We assume here that $a\geq 1$ and $-b\geq 1$, and recall that
$a+b\not=0$. We denote by~$d\geq 1$ the gcd of~$a$ and~$b$.

In this case the congruence \eqref{eqcongruence} becomes
\begin{equation*}
  m^{2a}\equiv n^{-2b}\mods q.
\end{equation*}

It admits ``systematic'' (positive) solutions of the form
$$
(m,n)=(r^{-b/d},r^{a/d})
$$
for~$r\geq 1$. The contribution~$N^{\rm syst}_{a,b}(X)$ of solutions of this
form to $N_{a,b}(X)$ is equal to
\begin{equation}\label{eq-b0}
  N^{\rm syst}_{a,b}(X)=\frac{1}{2}\sum_{r\geq 1}\frac{1}{r^{(a-b)/2d}}
  V\Bigl(\frac{r^{(a-b)/d}}{X}\Bigr)=\frac{1}{2}\zeta\Bigl(\frac{a-b}{2d}\Bigr)
  +O(X^{-1/2+\eps}),
\end{equation}
for any $\eps>0$, where we note that the fact that $a+b\not=0$ implies
that $(a-b)/(2d)>1$.

We denote by $N^{\rm err}_{a,b}(X)$ the contribution of the positive
solutions $(m,n)$ to the congruence which are not of the form above.
Let $(m,n)$ be one of these. Then the integer
$$
m^{2a}-n^{-2b}=(m^a-n^{-b})(m^a+n^{-b})
$$
is non-zero and divisible by~$q$, so either $m^a$ or $n^{-b}$ is at
least~$q/2$, and hence we obtain the lower bound
\begin{equation}\label{lowerbound2}
  \max(m,n)\geq (\demi q)^{1/\max(a,-b)}.
\end{equation}

Fix again $\delta>0$ small enough. By a dyadic partition of unity and
the rapid decay~(\ref{eq-r2}) of~$V$ (applied for some fixed~$A$ large
enough, depending on~$\delta$), we have
\begin{equation}\label{eq-b1}
  N^{\rm err}_{a,b}(X) \ll (\log q)^2\max_{I,J} N_{2a,2b}(I\times J;q)+X^{-1}
\end{equation}
(with notation as in~(\ref{eq-congruences-norm})), where~$(I,J)$ runs
over pairs of intervals of integers $n\leq X$ such that
$$
\max(|I|,|J|)\gg q^{1/\max(a,-b)},\quad\quad |I|\,|J|\ll X^{1+\delta},
$$
where the implicit constants depend only on~$a$ and~$b$.

Fix one such pair $(I,J)$.  Let
$$
K_+=\max(|I|,|J|),\quad\quad K_-=\min(|I|,|J|).
$$

The bound~(\ref{eq-trivial}) gives the estimate
$$
N_{2a,2b}(I\times J;q)\ll \frac{K_-(K_+/q+1)}{(K_+K_-)^{1/2}}\ll
\frac{X^{1/2+\delta/2}}{q}+\Bigl(\frac{K_-}{K_+}\Bigr)^{1/2}.
$$

We now distinguish two cases again, depending on some parameter~$\eta>0$
to be chosen later, depending on~$a$ and~$b$.

\textbf{Case 1.} If
$$
\frac{K_+}{K_-}\geq q^\eta,
$$
we deduce that
\begin{equation}\label{eq-b2}
  N_{2a,2b}(I\times J;q)\ll \frac{X^{(1+\delta)/2}}{q}+q^{-\eta/2}.
\end{equation}

\textbf{Case 2.} Suppose that $K_+/K_-<q^{\eta}$. The condition
$a+b\not=0$ implies that at least one of $a/b$ or $b/a$ is not an
integer, hence is not a negative integer. Under the condition that
$$
K_+\leq \demi q^{3/4},
$$
we can apply Theorem~\ref{thmpierce} with~$k=2$ in either case with
$(M,N)=(K_+,K_-)$, and deduce that
$$
N_{2a,2b}(I\times J;q)\ll (\log q)^{1/4} K_+^{1/6}K_-^{-1/4}
=(\log q)^{1/4}(K_+K_-)^{1/6}K_-^{-5/12},
$$
and (since $K_-\geq q^{-\eta}\sqrt{K_-K_+}$) further
\begin{align}
  N_{2a,2b}(I\times J;q)\ll (\log q)^{1/4} (K_+K_-)^{-1/24}
  q^{5\eta/12}&=(\log
  q)^{1/4}(|I||J|)^{-1/24}q^{5\eta/12}\notag\\
  &\ll (\log q)^{1/4}q^{-1/(24\max(a,-b))+5\eta/12}.
  \label{eq-b3}
\end{align}

The bounds~(\ref{eq-b2}) and~(\ref{eq-b3}) lead by~(\ref{eq-b1}) to the
estimate
$$
N^{\rm err}_{a,b}(X) \ll (\log q)^2\Bigl(
q^{-1}X^{(1+\delta)/2}+q^{-\eta/2}+(\log q)^{1/4}
q^{-1/(24\max(a,-b))+5\eta/12} \Bigr)+X^{-1},
$$
provided the condition $K_+\leq \demi q^{3/4}$ is satisfied for all
relevant intervals~$I$ and~$J$. Since
$$
K_+^2q^{-\eta}\leq K_+K_-=|I||J|\ll X^{1+\delta},
$$
and $1-\delta<1/(1+\delta)$ if $0<\delta<1$, this condition will be
true for~$q$ large enough as soon as $X\ll q^{(3/2-\eta)(1-\delta)}$.

Since this last condition on~$X$ implies that
$$
q^{-1}X^{(1+\delta)/2}\ll q^{-1/4-\eta/2},
$$
we conclude, by combining this with~(\ref{eq-b0}), that if~$\eta>0$
and $0<\delta<1$, then
\begin{equation}\label{eq-mt2}
  N_{a,b}(X) =\frac{1}{2}\zeta\Bigl(\frac{a-b}{2d}\Bigr)
  +O(X^{-1/2+\eps}+q^{-1/(24\max(a,-b))+5\eta/12+\eps}+q^{-\eta/2})
\end{equation}
for any $\eps>0$, provided $X\ll q^{(3/2-\eta)(1-\delta)}$.

\subsection{Conclusion of the proof}

Since $a+b\not=0$, we can apply Proposition~\ref{pr-tab-weil} to
estimate the exponential sums $\widetilde{T}_{2a,2b}(m^{2a}n^{2b};q)$ in
$P_{a,b}(Y)$. Using the decay of~$V$ (see~(\ref{eq-r2})), we obtain
\begin{equation}\label{Mab2bound}
  P_{a,b}(Y)\ll q^{-1/2}Y^{1/2+\eps}
\end{equation}
for any $\eps>0$, where the implicit constant depends on~$a$, $b$
and~$\eps$.

\begin{remark}\label{rempisa}
  This bound is trivial in the ``balanced'' case, i.e., when $Y=q$. It
  should however be possible with current methods to improve it by a
  factor $q^{-\eta}$ for some $\eta>0$ if $Y$ has size about $q$, but
  this would require much more elaborate arguments, in the spirit of the
  work of Kowalski, Michel and Sawin~\cite[Thm.\,1.3]{Pisa}.
\end{remark}

Combining~(\ref{Mab2bound}) with~(\ref{eq-mt1}) or~(\ref{eq-mt2}), we
obtain in all cases (recalling that $a\geq 1$) the estimate
$$
M^{\rm even}_{a,b}(q)=\demi \alpha(a,b)+O(q^{-1/(24(|a|+|b|))+5\eta/12+\eps}
+X^{-1/2+\eps}+q^{-\eta/2}+q^{-1/2}Y^{1/2+\eps})
$$
where
$$
\alpha(a,b)=
\begin{cases}
  1&\text{ if } ab\geq 1,\\
  \zeta(\tfrac{a-b}{2(a,b)})&\text{ if } ab\leq -1,
\end{cases}
$$
provided $X\ll q^{(3/2-\eta)(1-\delta)}$ with $\eta>0$
and~$0<\delta<1$..

We pick (say) $\delta=1/4$ and $\eta=\frac{c_0}{|a|+|b|}$ for $c_0$
fixed and small enough, and define
$$X=q^{(3/2-\eta)(1-\delta)}=q^{9/8-3\eta/4},$$ so that
$Y=q^{7/8+3\eta/4}$. Inspecting the error terms above, we see that if
$c_0$ is small enough ($c_0<1/20$ suffices), then there exists $c>0$,
depending only on~$c_0$, such that the asymptotic formula
$$
M^{\rm even}_{a,b}(q)=\demi \alpha(a,b)+O(q^{-c/(|a|+|b|))+\eps})
$$
holds for any $\eps>0$, where the implicit constant depends only on~$a$,
$b$ and~$\eps$.

We now consider briefly the contribution from the odd characters. This
is
$$
M^{\rm odd}_{a,b}(q)= \frac{1}{q-1} \sum_{\substack{\chi\mods{q}\\\chi\text{
      odd}}} \Bigl\{\sum_{m,n\geq 1}\frac{\chi(m^an^b)}{\sqrt{mn}}
W\Bigl(\frac{mn}{X}\Bigr) + \frac{\eps(\chi^a)\eps(\chi^b)}{
  i^{\beta}}\sum_{m,n\geq 1}\frac{\bar{\chi}(m^an^b)}{\sqrt{mn}}
W\Bigl(\frac{mn}{Y}\Bigr) \Bigr\}
$$
for some integer~$\beta$ independent of~$\chi$ (depending on the parity
of~$a$ and~$b$) and some function~$W$ that also depends only on the
parity of~$(a,b)$. This leads to the variant
$$
M^{\rm odd}_{a,b}(q)=N'_{a,b}(X)+i^{-\beta}P'_{a,b}(Y)
$$
of~(\ref{eq-even-terms}), where
$$
N'_{a,b}(X)=\frac{1}{2}
\sum_{\substack{m,n\geq 1\\m^an^b\equiv 1\mods
    q}}\frac{1}{\sqrt{mn}}W\Bigl(\frac{mn}{X}\Bigr)
-\frac{1}{2}\sum_{\substack{m,n\geq 1\\m^an^b\equiv -1\mods
    q}}\frac{1}{\sqrt{mn}}W\Bigl(\frac{mn}{X}\Bigr)
$$
and
$$
P'_{a,b}(Y)=\frac1{2\sqrt q}\sum_{\substack{m,n\geq
    1\\(mn,q)=1}}\frac{1}{\sqrt{mn}}
(\widetilde{T}_{a,b}(m^an^b;q)-\widetilde{T}_{a,b}(-m^an^b;q))
W\Bigl(\frac{mn}{Y}\Bigr).
$$

Proceeding exactly as before (with the same choices of~$X$ and~$Y$), we
obtain
$$
M^{\rm odd}_{a,b}(q)=\demi \alpha(a,b)+O(q^{-c/(|a|+|b|)+\eps}).
$$
\par
Finally, combining these with~(\ref{eq-first-decomp}), we conclude the
proof of Theorem~\ref{th-1} in the case where $a+b\not=0$.

\section{Proof of Theorem~\ref{th-1} for $a+b=0$}


We will write simply $M_a(q)=M_{a,-a}(q)$.  As in the previous
section, we treat first in details the average~$M^{\rm even}_a(q)$
over even characters. Starting from~(\ref{eq-even-terms}) with $X=Y=q$
and using the fact that~$\eps(\chi^a)\eps(\chi^{-a})=1$ for~$\chi$
even and $\chi^a$ non-trivial, we get
\begin{equation}
  \label{Maeveninitial}
  M^{\rm even}_{a} (q) =\sum_{m^{2a} \equiv n^{2a} \mods q } \frac
  1{\sqrt{mn}} V \Bigl( \frac {mn}q\Bigr)+O\Bigl(\frac{\log q}{q^{1/2}}\Bigr),
\end{equation}
where $V=V_{0,0,\chi}$ is independent of~$\chi$ even and the second
term accounts for the contribution of the characters $\chi$ such that
$\chi^a=1$.

The equation $m^{2a}= n^{2a}$ has solutions in positive integers given
by $m=n\geq 1$, and no other integral solution. The
contribution~$M^{\rm syst}_a(X)$ of these solutions to $M^{\rm even}_a(q)$ is
equal to
\begin{equation}
  \label{diagonalcontribution}
  M^{\rm syst}_a(X)=\sum_{\substack{m\geq 1\\(m,q)=1}}
  \frac 1{m} V \left( \frac {m^2}q\right)=\frac{1}{2}\log q+C+O(q^{-1/2}),
\end{equation}
for some constant $C$, independent of~$a$ and~$b$
(see~\cite[Lemma\,4.1,\,(4.6)]{IS}, which provides the value
\begin{equation}\label{eq-C}
  C=\gamma+
  \frac{1}{2}\frac{\Gamma'(1/4)}{\Gamma(1/4)}
  -\frac{\log\pi}{2}=
  \frac{\gamma}{2}-\frac{\pi}{4}-\frac{3(\log 2)}{2}-\frac{\log
    \pi}{2}=-2.108876\ldots,
\end{equation}
where~$\gamma$ is Euler's constant).\footnote{\ Actually,
  \cite[Lemma\,4.1]{IS} is established for a test function $G$
  satisfying conditions slightly different from \eqref{condforG} (see
  \cite[p.\.942]{IS}), but the extension to the equality
  \eqref{diagonalcontribution} is straightforward.}

We denote by $M^{\rm err}_{a}(q)$ the contribution of the positive solutions
$(m,n)$ to the congruence which are not of the form above.  Let $(m,n)$
be one of these. Then the integer $$m^{2a}-n^{2a}=(m^a-n^{a})(m^a+n^{a})$$
is non-zero and divisible by~$q$, and we obtain the lower bound
\begin{equation*}
  \max(m,n)\geq (\demi q)^{1/a}
\end{equation*}
(as in~(\ref{lowerbound2})).

We split the sum further as follows:
$$
M^{\rm err}_a(q)=\sum_{\substack{\rho\in\Ff_q^{\times}\\\rho^{2a}=1}}
M^{\rm err}_a(q,\rho),
$$
where
$$
M^{\rm err}_a(q,\rho)=\sum_{\substack{m\equiv \rho n\mods{q}\\
    m^{2a}\not=n^{2a} }} \frac 1{\sqrt{mn}} V \Bigl( \frac
{mn}q\Bigr).
$$

We define
$$
M^*_{1,-1}((\rho,1),I\times J;q) =|\{(m,n)\in I\times J\,\mid\,
n\equiv\rho m\mods{q}\text{ and } m^{2a}\not= n^{2a}\}|.
$$

Let~$\delta>0$ be a fixed parameter to be chosen later. By a dyadic
partition of unity and the rapid decay of~$V$ (\ref{eq-r2}), we have
\begin{equation}\label{eq-e1}
  M^{\rm err}_{a}(q,\rho) \ll (\log q)^2\max_{I,J}
  \frac{M^*_{1,-1}((\rho,1),I\times J;q)}{(|I||J|)^{1/2}}+q^{-1}
\end{equation}
where~$(I,J)$ runs over pairs of intervals of integers $1\leq n\leq q$ such that
$$
\max(|I|,|J|)\gg q^{1/a},\quad\quad |I|\,|J|\ll q^{1+\delta},
$$
the implicit constants depending only on~$a$ and~$b$.

Fix one such pair $(I,J)$. If $M^*_{1,-1}((\rho,1),I\times J;q)$ is
zero, then there is nothing to do. Otherwise, let
$$
K_+=\max(|I|,|J|),\quad\quad K_-=\min(|I|,|J|).
$$

Let $\eta>0$ be a real number to be fixed later.

\textbf{Case 1.} Suppose that
\begin{equation}\label{eq-unbal}
  \frac{K_+}{K_-}\geq q^\eta.
\end{equation}

The bound~(\ref{eq-trivial}) then gives the estimate
\begin{multline}\label{eq-e2}
  \frac{M^*_{1,-1}((\rho,1),I\times J;q)}{(|I||J|)^{1/2}}\ll
  \frac{K_-(K_+/q+1)}{(K_+K_-)^{1/2}}
  \\
  \ll
  \frac{(|I||J|)^{1/2}}{q}+\Bigl(\frac{K_-}{K_+}\Bigr)^{1/2}\ll
  q^{-1/2+\delta/2}+  q^{-\eta/2}.
\end{multline}

\textbf{Case 2.} Suppose that~(\ref{eq-unbal}) is not satisfied, ie.
$$
\frac{K_+}{K_-}< q^\eta.
$$

We have therefore
$$
|I||J|\gg q^{2/a-\eta}.
$$

Suppose first that $\rho^2=1$ and that $\delta+\eta<1$. In view of the
sizes of $m$ and $n$, the congruence $m-\rho n\equiv 0\mods q$ implies
the equality $m-\rho n=0$ if~$q$ is large enough. It follows that
$m^{2a}=n^{2a}$, which shows that
$M^*_{1,-1}((\rho,1),I\times J;q)=0$.

Suppose now that $\rho^2\not=1$; let~$\Lambda_{\rho}\subset \Zz^2$ be
the lattice determined by the condition
$$
m\equiv \rho n\mods{q},
$$
and let~$\lambda_{\rho}\geq 1$ be its minimum. We claim that
$$
\lambda_{\rho}\gg q^{1/a}.
$$

Indeed, let $(m,n)$ be a non-zero solution of the congruence
$m\equiv \rho n\mods{q}$. We then have $m^{2a}\equiv n^{2a}\mods{q}$,
hence either $q$ divides $m^a-n^a$ or $q$ divides $m^a+n^a$. But~$m-n$
and~$m+n$ are non-zero (since~$\rho^2\not=1$), so (as in the proof
of~(\ref{lowerbound2})), we deduce that $\max(|m|,|n|)\gg q^{1/a}$.
By Proposition~\ref{pr-lattice2}, we then have 
$$
M^*_{1,-1}((\rho,1),I\times J;q)\leq M_{1,-1}((\rho,1),I\times J;q)\ll
\frac{|I||J|}{q}+\frac{|I|+|J|}{\lambda_{\rho}}+1,
$$
hence
\begin{align}\nonumber
  \frac{M^*_{1,-1}((\rho,1),I\times J;q)}{(|I||J|)^{1/2}}
  &\ll
    \frac{(|I||J|)^{1/2}}{q}
    +\frac{|I|+|J|}{(|I||J|)^{1/2}}\frac{1}{\lambda_{\rho}}+
    \frac{1}{(|I||J|)^{1/2}}\\
  &\ll q^{-1/2+\delta/2}+q^{\eta/2-1/a}+q^{\eta/2-1/a}.
\label{eq-e4}
\end{align}

From~(\ref{eq-e1}) and the bounds~(\ref{eq-e2}) and~(\ref{eq-e4}), we
deduce that for $\delta+\eta<1$
$$
M^{\rm err}_a(q,\rho)\ll (\log q)^2
(q^{-\eta/2}+q^{-1/2+\delta/2}+q^{\eta/2-1/a}).
$$

Using~\eqref{Maeveninitial} and (\ref{diagonalcontribution}), we conclude that
$$
M^{\rm even}_a(q)=\frac{1}{2}\log q+C+O\Bigl((\log q)^2
(q^{-1/2}+q^{-\eta/2}+q^{-1/2+\delta/2}+q^{\eta/2-1/a})\Bigr).
$$

Taking, e.g., $\delta=1/4$ and~$\eta=1/(2a)$, we
obtain that
$$
M^{\rm even}_a(q)=\frac{1}{2}\log q+C+O(q^{-c/a})
$$
for some absolute constant~$c>0$.
 
Finally, one finds by looking at the functional equation that the
contribution of odd characters is
$$
M^{\rm odd}_a(q) =\sum_{m^{a} \equiv n^{a} \mods q } \frac 1{\sqrt{mn}} V
\Bigl( \frac {mn}q\Bigr) -\sum_{m^{a} \equiv -n^{a} \mods q } \frac
1{\sqrt{mn}} V \Bigl( \frac {mn}q\Bigr),
$$
and hence the same analysis as above gives the asymptotic formula
$$
M^{\rm odd}_a(q)=\frac{1}{2}\log q+C+O(q^{-c/a}),
$$
so that Theorem~\ref{th-1} follows by adding these two terms.

\begin{remark}\label{rm-pol}
  A similar argument leads, mutatis mutandis, to the following result:

  \begin{proposition}
    Let~$f\in \Zz[X]$ be a polynomial of degree~$d\geq 2$, irreducible
    over~$\Qq$. There exists $\delta>0$, depending only on~$d$, such
    that the estimate
    $$
    \frac{1}{q-1} \sum_{\chi\mods{q}} \chi(\rho)|L(\demi,\chi)|^2\ll
    q^{-\delta}
    $$
    holds for all primes~$q$ such that~$f$ has at least one root
    modulo~$q$ and~$\rho$ is an arbitrarily chosen root of~$f\mods{q}$.
  \end{proposition}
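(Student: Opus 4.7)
The proof follows the strategy of Section~6 for $a+b=0$ with $a=1$, but with the lattice associated to $\rho^{2}=1$ replaced by the lattice associated to a root of an irreducible polynomial of degree $d$.

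\emph{Step 1 (Setup).} Apply the approximate functional equation (\ref{EFA}) to $L(\demi,\chi)L(\demi,\bar\chi)=|L(\demi,\chi)|^2$, with $(a,b)=(1,-1)$ and $X=Y=q$. Since $\eps(\chi)\eps(\bar\chi)=\chi(-1)$, after splitting by parity and applying orthogonality against the twist $\chi(\rho)$, the problem reduces to estimating expressions of the shape
$$
\Sigma^\pm(\rho;q)=\sum_{\substack{m,n\geq 1\\ n\equiv \pm\rho m\mods{q}}}\frac{1}{\sqrt{mn}}V\Bigl(\frac{mn}{q}\Bigr),
$$
together with the analogous dual sums of the same structure (compare with~(\ref{Maeveninitial})).

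\emph{Step 2 (Lattice minimum bound).} The key new input is that the lattice
$$
\Lambda_\rho=\{(m,n)\in\Zz^2\,\mid\, n\equiv \rho m\mods{q}\}
$$
has minimum $\lambda_\rho\gg_{f} q^{1/d}$. Let $F(m,n)=m^{d}f(n/m)\in\Zz[m,n]$ be the homogenization of $f$, a nonzero integral form of degree $d$. For $(m,n)\in\Lambda_\rho$ with $(m,q)=1$, the congruence gives $F(m,n)\equiv m^{d}f(\rho)\equiv 0\mods{q}$. Since $f$ is irreducible of degree $\geq 2$ over $\Qq$, it has no rational roots, so $F(m,n)\neq 0$ for any $(m,n)\neq(0,0)$. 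Thus $|F(m,n)|\geq q$; combined with $|F(m,n)|\ll_{f}\max(|m|,|n|)^{d}$, this forces $\max(|m|,|n|)\gg_{f} q^{1/d}$. The case $q\mid m$ forces $q\mid n$, giving $\max(|m|,|n|)\geq q$. The same bound applies to $\Lambda_{-\rho}$, since $-\rho$ is a root of the irreducible polynomial $f(-X)$. Note that for all but finitely many primes $q$ one has $\rho\neq\pm 1$, since otherwise $q$ would divide the nonzero integer $f(\pm 1)$; the finite exceptional set is absorbed into the implicit constant.

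\emph{Step 3 (Direct sum).} Using a dyadic partition and the rapid decay~(\ref{eq-r2}) of $V$, reduce $\Sigma^\pm(\rho;q)$ to the estimation, over dyadic blocks $B=I\times J$ with $|B|\leq q^{1+\eps}$, of $M_{1,-1}((\pm\rho,1),B;q)/\sqrt{|B|}$. Set $K_+=\max(|I|,|J|)$ and $K_-=\min(|I|,|J|)$, and fix a parameter $\eta>0$. Following the treatment of $M^{\rm err}_a(q,\rho)$ in Section~6, split into two regimes. In the \emph{unbalanced} regime $K_+/K_-\geq q^\eta$, the trivial bound~(\ref{eq-trivial}) yields
$\ll q^{-1/2+\eps/2}+q^{-\eta/2}$.
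In the \emph{balanced} regime $K_+/K_-<q^\eta$, Proposition~\ref{pr-lattice2} combined with the bound $\lambda_\rho\gg q^{1/d}$ from Step~2 gives
$$
\frac{M_{1,-1}((\pm\rho,1),B;q)}{\sqrt{|B|}}\ll q^{-1/2+\eps/2}+q^{\eta/2-1/d}+q^{-1/(2d)+\eta/2},
$$
where the last term uses that any non-empty block necessarily satisfies $K_+\geq\lambda_\rho\gg q^{1/d}$, hence $|B|\geq K_+^{2}q^{-\eta}\gg q^{2/d-\eta}$. Choosing $\eta=1/d$ balances the new terms and yields $\Sigma^\pm(\rho;q)\ll_{f} q^{-1/(2d)+\eps}$.

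\emph{Step 4 (Dual term and conclusion).} The dual contribution after the AFE has the same structural form, since $\eps(\chi)\eps(\bar\chi)=\pm 1$ depending on parity; it is therefore bounded by exactly the same argument, with the same power saving. After accounting for the trivial character and the characters with $\chi^{\pm 1}=1$ (contributing $O(q^{-1+\eps})$), we obtain the bound $\ll_f q^{-\delta}$ for any $\delta<1/(2d)$.

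\emph{Main obstacle.} The argument is a fairly direct adaptation of the $a+b=0$ analysis of Section~6, so no genuinely new technical difficulty arises. The essential new point is Step~2: the irreducibility of $f$ yields a lattice minimum bound that cleanly replaces the elementary factorization $m^{2a}-n^{2a}=(m^{a}-n^{a})(m^{a}+n^{a})$ used in Section~6 for $\rho\neq\pm 1$. The remaining care lies in tracking the $f$-dependent implicit constants and in absorbing the finite set of exceptional primes dividing $f(\pm 1)$ into the implicit constant.
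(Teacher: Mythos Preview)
Your approach is correct and matches the paper's sketch exactly: reduce via the approximate functional equation to the congruence sum, split into balanced and unbalanced dyadic ranges, and in the balanced range invoke Proposition~\ref{pr-lattice2} with the key lattice-minimum bound $\lambda_\rho\gg_f q^{1/d}$, whose proof you supply via the homogenized form $F(m,n)=m^d f(n/m)$ (this is precisely the ``point'' the paper isolates in Remark~\ref{rm-pol}). One arithmetic slip: from your own inequality $|B|\gg q^{2/d-\eta}$ the third term should read $q^{-1/d+\eta/2}$, not $q^{-1/(2d)+\eta/2}$; with this correction your choice $\eta=1/d$ indeed balances all terms at $q^{-1/(2d)}$, whereas as written that choice would give no saving.
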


  More precisely, the application of the approximate functional equation
  shows that this follows from the bound
  $$
  \sum_{\substack{m,n\geq 1\\ (\rho m)^2\equiv
      n^2\mods{q}}}\frac{1}{\sqrt{mn}}V\Bigl(\frac{mn}{q}\Bigr)\ll
  q^{-\delta},
  $$
  which is estimated exactly like~$M_a^{\rm err}(q,\rho)$ above. The point
  is once again that the minimum of the lattices
  $$
  \{(m,n)\in\Zz^2\,\mid\, \rho m\equiv n\mods{q}\},\quad
  \{(m,n)\in\Zz^2\,\mid\, -\rho m\equiv n\mods{q}\}
  $$
  are large, namely of size $\gg q^{1/d}$.

  It would be interesting to study ``root twists'' of this kind for
  other families, e.g., to bound
  $$
  \sum_{\chi\mods{q}}\chi(\rho)|L(\demi,\chi)|^4, \quad\quad \sum_{f \in
    S_k(q)} \lambda_f(\rho)|L(\demi,f)|^2,
  $$
  where $S_k(q)$ is the family of Hecke eigenforms of conductor~$q$
  with Hecke eigenvalues $\lambda_f(n)$, and~$\rho$ is as above.

\end{remark}

\end{document}